\let\mathcal\mathscr
\numberwithin{equation}{section}
\newtheorem{theorem}{Theorem}[section]
\newtheorem{lemma}[theorem]{Lemma}
\theoremstyle{definition}
\newtheorem*{ack}{Acknowledgements}
\newtheorem{remark}[theorem]{Remark}
\renewcommand{\rho}{\varrho}
\newcommand{\ZZ}{\mathbf{Z}}
\newcommand{\QQ}{\mathbf{Q}}
\newcommand{\RR}{\mathbf{R}}
\renewcommand{\leq}{\leqslant}
\renewcommand{\geq}{\geqslant}
\newcommand{\bs}{\boldsymbol}
\newcommand{\x}{\mathbf{x}}
\newcommand{\y}{\mathbf{y}}
\renewcommand{\v}{\mathbf{v}}
\newcommand{\uu}{\mathbf{u}}
\newcommand{\cc}{\mathbf{c}}
\newcommand{\dd}{\mathbf{d}}
\newcommand{\z}{\mathbf{z}}
\newcommand{\w}{\mathbf{w}}
\renewcommand{\b}{\mathbf{b}}
\newcommand{\ve}{\varepsilon}
\DeclareMathOperator{\supp}{supp}
\DeclareMathOperator{\sgn}{sgn}
\DeclareMathOperator{\mat}{Mat}
\DeclareMathOperator{\diag}{Diag}
\DeclareMathOperator{\sln}{SL_n}
\newcommand{\sumstar}{\sideset{}{^*}\sum}
\newcommand{\sumflat}{\sideset{}{^\flat}\sum}
\newcommand{\jacobi}[2]{\left(\frac{#1}{#2}\right)}
\renewcommand{\mod}[1]{\hspace{-2.9mm}\bmod{#1}}
\newcommand{\hf}[1]{h\left(r,#1\right)}
\begin{document}

\title{Counting solutions to quadratic polynomials}

\author{V.\ Vinay Kumaraswamy}

\address{Department of Mathematics, KTH, 10044 Stockholm \\}
\email{vinay.visw@gmail.com} 

\subjclass[2010]{11D45, 11P55}
\keywords{Quadratic polynomials, Circle method}

\begin{abstract}
Using the circle method, we obtain asymptotic formulae for the number of integer solutions to certain quadratic polynomials that 
are uniform in the coefficients of the polynomial.
\end{abstract}

\date{\today}

\maketitle

\section{Introduction}

Let $F(x_1,\ldots,x_n) \in \ZZ[x_1,\ldots,x_n]$ be a quadratic polynomial with integer coefficients. We write 
\begin{equation}\label{eq:fqln}
F(\x) = Q(\x) + L(\x) + N,
\end{equation} 
where $Q(\x)$ is an indefinite quadratic form of rank $r$ with $5 \leq r \leq n$, $L(x_1,\ldots,x_n)$ is a linear form and $N$ is an integer. In this article we will give an asymptotic formula for the counting function
$$
N(P,w) = \sum_{\substack{\x \in \ZZ^n \\ F(\x) = 0}}w(P^{-1}\x)
$$  
as $P \to \infty$, where $w(\x)$ is a smooth function with compact support. Importantly, this result is uniform in the size of the coefficients of $F$, and it 
is a key ingredient in~\cite{KR24} to count rational points on cubic hypersurfaces. Our result is a generalisation of work of Browning and Dietmann~\cite[Proposition 1]{BD08} to 
the setting of general quadratic equations.

\subsection{Statements of the main results}
Let $M = \left(M_{i,j}\right)\in \mat_{n\times n}(\QQ)$ be a symmetric matrix such that $Q(x_1,\ldots,x_n) = \x^tM\x$. Since $M$ is symmetric, there exists an orthogonal 
transformation $R \in O_n(\RR)$ such that $R^{t} M R = \diag(\lambda_1,\ldots,\lambda_{r},0,\ldots,0)$, where $\lambda_i$ are the non-
zero eigenvalues of $M$. Set $\|F\| = \max_{i,j}|M_{i,j}|.$ Since the matrix entries 
of $M$ are bounded by $\|F\|$ we have the estimate $|\lambda_i| \leq n \|F\|$.

Let $c > 0$ be a small real number. Let $w_0$ be a non-negative smooth function with compact support in $[-2c,2c]$ such that 
$w_0(x) = 1$ for $x \in [-c,c]$. 

Let $\bs{\xi} = (\xi_1,\ldots,\xi_n) \in \RR^n$ be a non-singular solution to the equation $Q(\x) = 0$. Define
\begin{equation}\label{eq:stanwtfn}
w_2(\x) = \prod_{i=1}^n w_0(x_i-\xi_i).
\end{equation}

If $\lambda_1,\ldots,\lambda_{r}$ are the non-zero eigenvalues of $Q(\x)$, 
let $\tau = (\tau_1,\ldots,\tau_{r}) \in \RR^{r}$ be a non-zero real solution to 
$$
Q_{\sgn}(\x) = \sum_{i=1}^{r}\frac{\lambda_i}{|\lambda_i|}x_i^2 = 0.
$$ 
Observe that such a solution exists because $Q_{\sgn}(\x)$ is indefinite.
Set
\begin{equation*}
w_1(\x) = \prod_{i=1}^{n-h}w_0(x_i-\tau_i).
\end{equation*}
Also, let
\begin{equation}\label{eq:wtbrd}
w_Q(\x) = w_3(R^t\x),
\end{equation}
where
\begin{equation*}
w_3(\x) = w_1(|\lambda_1|^{1/2}x_1,\ldots,|\lambda_{r}|^{1/2}x_{r},\|F\|^{1/2}x_{r+1},\ldots,\|F\|^{1/2}x_n).
\end{equation*}
Let
$$
\sigma_{\infty}(Q_{\sgn},w_1) \int_{\RR}\int_{\RR^n}w_1(\x)e(-\theta Q_{\sgn}(\x))\, d\x \,d\theta
$$
and
$$
\sigma_{\infty}(Q,w_2) = \int_{\RR}\int_{\RR^n}w_2(\x)e(-\theta Q(\x))\, d\x \,d\theta.
$$
denote the singular integrals associated with $Q_{\sgn}$ and $Q$. Let 
\begin{equation}\label{eq:singseriesf}
\mathfrak{S}(F) = \prod_p\lim_{t \to \infty}p^{-t(n-1)}\#\left\{\x \bmod{p^t} : F(\x) \equiv 0 \bmod{p^t}\right\}
\end{equation}
denote the singular series.

Define 
\begin{equation*}
\kappa = \kappa_n = \begin{cases}0 & \mbox{ if $2 \mid r,$} \\ 1 &\mbox{ otherwise.}
\end{cases}
\end{equation*}

\begin{theorem}\label{propbh19}
Suppose that $F(\x)$ is as in~\eqref{eq:fqln} where $Q(\x)$ is an indefinite quadratic form of rank $r \geq 5$. Let $\lambda_1,\ldots,\lambda_{r}$ 
be the non-zero eigenvalues of $Q$ and let $\Delta = \prod_{i=1}^r |\lambda_i|$. 
Assume that the equation $F(\x) \equiv 0 \bmod{p^t}$ is soluble for all prime powers $p^t$. 
Assume for any $A \geq 1$ we have 
$$
\sup_{|\x| \leq A} |L(\x)| \ll \|F\|^2,
$$ where the implied constant depends only on $A$. 
Let $w_Q(\x)$ be as in~\eqref{eq:wtbrd}. Let $\eta > 0$. Suppose that $P \geq 1$ is such that 
$$
\|F\| \ll \min\left\{P^{2/3-\eta},|\lambda_i|^{1/4}P^{1/2-\eta}\right\}\text{ and } |N| \ll P^{2-\eta}.
$$ 
Define 
$$
\mathcal{E}(P) = \Delta^{-\frac{1}{2r}}\|F\|^{\frac{r}{2}}P^{n-\frac{r}{2}-\frac{1}{2}} + 
\Delta^{-\frac{1}{r}+\frac{\kappa}{2r}}\|F\|^{\frac{r}{2}-1}P^{n-\frac{r}{2}-\frac{\kappa}{2}}.
$$
Then for any $\ve > 0$ we have 
$$N(P,w_Q) 
= 
\frac{\sigma_{\infty}(Q_{\sgn},w_1)\mathfrak{S}(F)P^{n-2}}{\|F\|^{\frac{n-r}{2}}\Delta^{\frac{1}{2}}}+ O_{\ve,\eta}\left(\|F\|^{\ve}P^{\ve}\mathcal{E}(P)\right).$$
Moreover, we have that $\sigma_{\infty}(Q_{\sgn},w_1) \gg 1$ and $\mathfrak{S}(F) \gg_{\ve} \Delta^{-\frac{r}{r-4}-\ve}$.
\end{theorem}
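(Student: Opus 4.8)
The plan is to run the Heath--Brown smooth $\delta$-method, in the spirit of the work of Browning and Dietmann that this result generalises. Writing the indicator of $\{F(\x)=0\}$ via the $\delta$-symbol with a parameter $Q_0$ of size comparable to $P$ (so that the support of the function $h$ confines the modulus to $q\ll Q_0$), one obtains
$$N(P,w_Q)=c_{Q_0}\sum_{q=1}^{\infty}\frac1q\sumstar_{a\bmod q}\sum_{\x\in\ZZ^n}w_Q(P^{-1}\x)\,e\!\left(\frac{aF(\x)}{q}\right)h\!\left(\frac{q}{Q_0},\frac{F(\x)}{Q_0^2}\right),$$
with $c_{Q_0}=1+O_A(Q_0^{-A})$. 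Splitting $\x$ into residue classes modulo $q$ and applying Poisson summation to the resulting lattice sum turns this into
$$N(P,w_Q)=c_{Q_0}\,P^n\sum_{\cc\in\ZZ^n}\sum_{q=1}^{\infty}\frac{S_q(\cc)}{q^{n+1}}\,I_q(\cc),$$
where $S_q(\cc)=\sumstar_{a\bmod q}\sum_{\b\bmod q}e\big((aF(\b)+\cc\cdot\b)/q\big)$ and $I_q(\cc)=\int_{\RR^n}w_Q(\v)\,h\big(q/Q_0,\,F(P\v)/Q_0^2\big)\,e(-P\cc\cdot\v/q)\,d\v$. The inner $\b$-sum in $S_q(\cc)$ is a Gauss sum attached to $Q$, twisted by the linear form $L$ and by $\cc$; completing the square removes the linear twist at the cost of a unimodular factor and a shift of $\cc$ — harmless because $\sup_{|\x|\le A}|L(\x)|\ll\|F\|^2$ — and reduces it to a classical quadratic Gauss sum, whose evaluation brings in $\disc$, $\Delta$ and quadratic characters and obeys the standard bound $|S_q(\cc)|\ll q^{n-r/2+\ve}$ times explicit gcd- and conductor-factors.

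The term $\cc=\0$ produces the main term. Recognising $\sum_{q}q^{-n-1}S_q(\0)$ as the singular series $\mathfrak{S}(F)$, and noting that as $Q_0\to\infty$ the remaining average of the integrals concentrates on the hypersurface $\{P^{-2}F(P\v)=0\}=\{Q(\v)+P^{-1}L(\v)+P^{-2}N=0\}$, which differs from $\{Q(\v)=0\}$ by $O(P^{-\eta})$ on the support of $w_Q$ once $\|F\|\ll P^{2/3-\eta}$ and $|N|\ll P^{2-\eta}$ are in force, one is left with $\mathfrak{S}(F)\,P^{n-2}\,\sigma_\infty(Q,w_Q)$ plus an admissible error. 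It then remains to evaluate $\sigma_\infty(Q,w_Q)$: the orthogonal substitution $\x=R\y$ (Jacobian $1$) turns $w_Q$ into $w_3$ and $Q$ into $\sum_i\lambda_i y_i^2$, and the rescaling $u_i=|\lambda_i|^{1/2}y_i$ for $i\le r$ and $u_j=\|F\|^{1/2}y_j$ for $j>r$ (Jacobian $\Delta^{-1/2}\|F\|^{-(n-r)/2}$) turns these into $Q_{\sgn}(\uu)$ and $w_1$, giving $\sigma_\infty(Q,w_Q)=\|F\|^{-(n-r)/2}\Delta^{-1/2}\sigma_\infty(Q_{\sgn},w_1)$ and hence exactly the asserted main term.

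The heart of the proof is bounding the contribution of $\cc\ne\0$ by $O_{\ve,\eta}(\|F\|^\ve P^\ve\mathcal{E}(P))$. For this I would first estimate the oscillatory integrals $I_q(\cc)$: Fourier-expanding $h$ in its second variable makes the phase quadratic in $\v$, so a Gaussian stationary-phase analysis applies, while repeated integration by parts in coordinates adapted to the shape of $w_Q$ forces $\cc$ to be essentially bounded in the appropriate normalisation; throughout one must track the dependence on $P$, $\|F\|$, $\Delta$ and each $|\lambda_i|$. One then sums $q^{-n-1}|S_q(\cc)|$ against these bounds over $q$ and $\cc$, exploiting the square-root cancellation in the Gauss sums. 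The hypothesis $r\ge5$ is precisely what makes these multiple sums converge. Optimising the truncation ranges produces the two terms of $\mathcal{E}(P)$: the first from the generic range, and the second — carrying the parity factor $\kappa$ — from the degenerate range in which the Gauss sum attains its largest size, the parity reflecting the case distinction $2\mid r$ versus $2\nmid r$ in the evaluation of quadratic Gauss sums. The conditions $\|F\|\ll\min\{P^{2/3-\eta},|\lambda_i|^{1/4}P^{1/2-\eta}\}$ and $|N|\ll P^{2-\eta}$ are exactly what is needed for this error to be of strictly smaller order than the main term and for the stationary-phase and integration-by-parts estimates to be legitimate; obtaining the $\|F\|$- and $\Delta$-uniformity sharply in both terms of $\mathcal{E}(P)$ simultaneously is, I expect, the principal technical obstacle.

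There remain the two positivity statements. For $\sigma_\infty(Q_{\sgn},w_1)\gg1$: by construction $w_1$ is a non-negative bump function equal to $1$ on a neighbourhood of fixed size of the \emph{nonsingular} real zero $\tau$ of the indefinite form $Q_{\sgn}$; since $\sigma_\infty(Q_{\sgn},w_1)$ computes the induced hypersurface measure of $\{Q_{\sgn}=0\}$ weighted by $w_1$, and near a nonsingular zero one may solve for one coordinate and integrate a strictly positive continuous density over a region of fixed size, this gives an absolute lower bound. For $\mathfrak{S}(F)=\prod_p\sigma_p\gg_\ve\Delta^{-\frac{r}{r-4}-\ve}$: a standard local computation gives $\sigma_p=1+O(p^{-(r-2)/2})$ for $p\nmid2\Delta$, and since $r\ge5$ one has $(r-2)/2>1$, whence $\prod_{p\nmid2\Delta}\sigma_p\gg1$ uniformly; for $p\mid2\Delta$ the assumed solubility of $F\equiv0\bmod{p^t}$ for every $t$ forces a nonsingular $p$-adic zero, hence $\sigma_p>0$, and a quantitative count of the solutions modulo $p^t$ of a quadratic polynomial whose quadratic part has $p$-adic discriminant $p^{v_p(\Delta)}$ yields $\sigma_p\gg_\ve p^{-(\frac{r}{r-4}+\ve)v_p(\Delta)}$; multiplying over $p\mid2\Delta$ and using $\prod_p p^{v_p(\Delta)}=\Delta$ gives the claimed bound.
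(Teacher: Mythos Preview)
Your overall architecture is correct and matches the paper: the smooth $\delta$-method with $Q_0\asymp P$, Poisson summation, $\cc=\0$ giving the main term, and the diagonalisation $\x\mapsto R\y\mapsto D^{-1}\uu$ collapsing $\sigma_\infty(Q,w_Q)$ to $\|F\|^{-(n-r)/2}\Delta^{-1/2}\sigma_\infty(Q_{\sgn},w_1)$. The positivity arguments for $\sigma_\infty$ and $\mathfrak{S}(F)$ are also fine.

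There is, however, a genuine gap in your account of the error terms. You attribute the two summands of $\mathcal{E}(P)$ to ``generic versus degenerate truncation ranges'', but that is not where they come from. After the Gauss-sum evaluation, the $a$-sum over $(\ZZ/q\ZZ)^\times$ does \emph{not} collapse to a unimodular factor: it becomes a Kloosterman sum $S(4N-Q^*(\bs l),\,Q^*(\cc);q)$ (or a Sali\'e sum when $kr$ is odd), where $Q^*$ is the quadratic form dual to $Q$. The paper then splits the non-zero $\cc$ according to whether $Q^*(\cc)\neq 0$ or $Q^*(\cc)=0$. In the first case the Weil bound gives square-root cancellation in $q$ and the $\cc$-sum is over a full box, producing $\Delta^{-1/(2r)}\|F\|^{r/2}P^{n-r/2-1/2}$. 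In the second case the Kloosterman/Sali\'e sum degenerates (for odd $r$ it forces $q$ to run over squares, which is the true source of the $\kappa$), the $q$-average is worse, but the $\cc$-sum is now restricted to the dual quadric $Q^*(\cc)=0$, for which one invokes Heath-Brown's uniform bound $\#\{|\cc|\le C:Q^*(\cc)=0\}\ll C^{n-2+\ve}$ to recover a saving; this yields $\Delta^{-1/r+\kappa/(2r)}\|F\|^{r/2-1}P^{n-r/2-\kappa/2}$. Without this dual-quadric dichotomy and the point-counting input you will not reach the stated $\mathcal{E}(P)$.

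A second omission is the treatment of the rank-deficient directions. Since $\rank Q=r<n$ in general, after diagonalising $S^tMS=\diag(\beta_1,\dots,\beta_r,0,\dots,0)$ the coordinates $b_{r+1},\dots,b_n$ carry no quadratic phase; the resulting linear exponential sums impose congruences on the last $n-r$ components of $S^t\cc$ and $S^t\bs l$, and the paper runs a separate case analysis (whether $(m_{r+1},\dots,m_n)=\0$, and whether $(d_{r+1},\dots,d_n)=\0$) with its own averaging lemmas. Your sketch treats $Q$ as if it were nondegenerate and would need to be amended here.
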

\begin{remark}
The estimate $\mathfrak{S}(F) \gg_{\ve} \Delta^{-\frac{r}{r-4}-\ve}$ follows from a straightforward modification of~\cite[Proposition 2]{BD08}.
\end{remark}

Note that Theorem~\ref{propbh19} implies, in particular, that a solution to the equation $F = 0$ exists, provided that 
$Q$ is indefinite of rank at least $5$ and that the equation $F \equiv 0\bmod{m}$ is soluble for every
integer $m$, thereby establishing the Hasse principle for such quadratic polynomials. This recovers a classical result of Siegel~\cite{S51} (see also work
of Watson~\cite{W61} for
a more elementary proof).

We also draw the reader's attention to important work of Dietmann~\cite{D03} that established a very general form of Theorem~\ref{propbh19}. Whereas
Dietmann requires only that $r \geq 3$ and that $F$ is everywhere locally soluble, our result
is stronger for those quadratic polynomials $F$ for which the product of the non-zero
eigenvalues of $Q$ is `large' in absolute value. This is precisely what has been ustilised in~\cite{KR24}. 

We prove Theorem~\ref{propbh19} by using the smooth $\delta$-function form of the circle method due to Heath-Brown~\cite[Theorem 3]{HB96} that
is based on earlier work of Duke, Friedlander and Iwaniec~\cite{DFI93}. The weight function $w_Q$ is modelled on the analogous weight function
constructed in~\cite{BD08}, which not only yields a good lower bound for the main term in $N(P, w_Q)$, but it also minimises the size of the error term. 

We end this introduction by remarking that by counting solutions
to the equation $F=0$ weighted by $w_2$, we obtain the following generalisation of~\cite[Theorem 2.1]{BH19}, whose proof 
we omit on account of its similarity to the proof of Theorem~\ref{propbh19}.
\begin{theorem}\label{propbh199}
Suppose that $r \geq 5$ and let $w$ be as in~\eqref{eq:stanwtfn}. Assume that $|N| \ll \|F\|^3$ and that for any $A \geq 1$ we have 
$$
\sup_{|\x| \leq A} |L(\x)| \ll \|F\|^{2},
$$ where the implied constant depends only on $A$. Suppose that $\|F\| \leq P$. Define 
$$\mathcal{E}(P) = \|F\|^{\frac{n}{2}+\frac{r}{4}-1-\frac{\kappa}{4}}P^{n-\frac{r}{2}-\frac{\kappa}{2}} + \|F\|^{\frac{n}{2}+\frac{r}{4}-\frac{1}{4}}P^{n-\frac{r}{2}-\frac{1}{2}}.$$
Then for any $\ve > 0$ we have $$N(P,w_2) = \sigma_{\infty}(F,w)\mathfrak{S}(F)P^{n-2} + O\left(\|F\|^{\ve}P^{\ve}\mathcal{E}(P)\right).$$
\end{theorem}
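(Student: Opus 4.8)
The plan is to run Heath-Brown's smooth $\delta$-method exactly as in the proof of Theorem~\ref{propbh19}, the only differences being that the counting weight is the isotropic function $w_2$ of~\eqref{eq:stanwtfn} rather than the rescaled weight $w_Q$, and that the hypotheses $|N|\ll\|F\|^3$ and $\|F\|\leq P$ replace the more restrictive ones there. Because $w_2$ is not composed with an anisotropic linear substitution, the eigenvalue bookkeeping that produces the $\Delta$-savings in Theorem~\ref{propbh19} never arises, so one is left with a cleaner special case; in particular the main term carries no $\Delta^{1/2}\|F\|^{(n-r)/2}$ in the denominator. Concretely, I would fix the circle-method parameter $Q_0\asymp\|F\|^{1/2}P$: on the support of $w_2(P^{-1}\x)$ one has $|x_i|\ll P$, whence $|Q(\x)|\ll\|F\|P^2$, $|L(\x)|\ll\|F\|^2P\ll\|F\|P^2$ (using $\|F\|\leq P$) and $|N|\ll\|F\|^3\ll\|F\|P^2$, so $|F(\x)|\ll\|F\|P^2\asymp Q_0^2$, which is exactly the balance for which the $\delta$-expansion of~\cite[Theorem 3]{HB96} is efficient. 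Inserting that expansion for $\delta(F(\x))$, opening the $\x$-sum into residue classes modulo $q$, and applying Poisson summation yields
$$
N(P,w_2)=c_{Q_0}\sum_{q\geq 1}\frac{1}{q^{n+2}}\sum_{\c\in\ZZ^n}T_q(\c)\,I_q(\c),
$$
where $c_{Q_0}=1+O_B(Q_0^{-B})$ for every $B>0$, where $T_q(\c)=\sum_{\substack{a\bmod q\\(a,q)=1}}\sum_{\b\bmod q}e\big((aF(\b)+\c\cdot\b)/q\big)$ is a complete exponential sum, and where $I_q(\c)=\int_{\RR^n}w_2(P^{-1}\x)\,h\big(q/Q_0,F(\x)/Q_0^2\big)\,e(-\c\cdot\x/q)\,d\x$.

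The main term is the contribution of $\c=\0$. Completing the square in $F(\b)=Q(\b)+L(\b)+N$ reduces $T_q(\0)$ to Gauss sums attached to a diagonalisation of $Q$ together with a complete linear sum over the $n-r$ directions on which $Q$ degenerates; the standard evaluation gives $q^{-(n+2)}T_q(\0)=q^{-1}A_q(F)$, where $\sum_qA_q(F)$ is the arithmetic series assembling conditionally to $\mathfrak{S}(F)$ and the parity $\kappa$ of $r$ enters through the half-integral power of $q$ carried by the Gauss sum. After the substitution $\x=P\uu$ one has $I_q(\0)=P^n\int_{\RR^n}w_2(\uu)\,h\big(q/Q_0,F(P\uu)/Q_0^2\big)\,d\uu$; summing this against $\sum_qq^{-1}h(q/Q_0,\cdot)$ and invoking the property of $h$ that reconstructs the delta-distribution collapses the $q$-sum and leaves $\sigma_\infty(F,w)\,\mathfrak{S}(F)\,P^{n-2}$ up to an admissible error, which is absorbed into $\mathcal{E}(P)$.

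For the error term I would bound the contribution of $\c\neq\0$ by combining two inputs. First, a square-root-cancellation estimate for $T_q(\c)$ that is uniform in $\|F\|$, obtained by completing the square over $\ZZ/q\ZZ$ and estimating the resulting Gauss sums: this gives a bound of the shape $|T_q(\c)|\ll q^{\,n/2+1+\ve}\gcd(q,\Xi(F,\c))^{1/2}$ for a suitable quantity $\Xi(F,\c)$ built from $\adj(M)$, $\det M$ and $\c$. Second, a decay estimate for $I_q(\c)$ by repeated integration by parts in $\x$, using that $\partial_{x_j}$ of the integrand is $\ll P^{-1}$ times bounded derivatives of $h$ (since $|\partial_{x_j}F(\x)|\ll\|F\|P$ and $Q_0^2\asymp\|F\|P^2$), which shows $I_q(\c)$ is negligible unless $|\c|\ll qP^{\ve}/P\ll\|F\|^{1/2}P^{\ve}$. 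Restricting to $q\ll Q_0=\|F\|^{1/2}P$ by the support of $h$, and summing over this range of $q$ and $\c$, produces after optimisation the two terms of $\mathcal{E}(P)$, the first from small $q$ and the second from $q$ near $Q_0$.

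The step I expect to be the main obstacle is the first input above: proving the complete exponential sum estimates with the stated exponent of $\|F\|$ and uniformly over all moduli, in particular over the "bad" $q$ sharing large common factors with the invariant factors of $M$, while simultaneously tracking the contribution of each such prime power to the singular series so that the identification of the $\c=\0$ term with $\mathfrak{S}(F)$ survives. Once these bounds are in hand, the remaining steps are the routine assembly described above, essentially verbatim from the proof of Theorem~\ref{propbh19} with the $\Delta$-refinements switched off.
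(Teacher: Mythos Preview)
Your plan is essentially what the paper intends: it explicitly omits the proof of this theorem ``on account of its similarity to the proof of Theorem~\ref{propbh19}'', and your proposal is precisely to rerun that argument with the weight $w_2$ in place of $w_Q$ and with the circle-method parameter rescaled to $Q_0\asymp\|F\|^{1/2}P$. That rescaling is the correct and necessary modification, since $w_2$ carries no eigenvalue compensation and hence $|F(\x)|\ll\|F\|P^2$ on its support; it is exactly what produces the extra powers of $\|F\|$ in $\mathcal{E}(P)$ and removes the $\Delta^{-1/2}\|F\|^{-(n-r)/2}$ factor from the main term.

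One point to tighten: your stated square-root bound $|T_q(\c)|\ll q^{n/2+1+\ve}\gcd(\cdots)^{1/2}$ is not available when $r<n$. Over the $n-r$ degenerate directions the inner sum is a full linear sum, so the correct crude bound is Lemma~\ref{lemmacs}, namely $|S_q(\c)|\leq (q^r,\mathcal D)^{1/2}q^{1+n-r/2}$, and the lost factor $q^{(n-r)/2}$ is recovered not from cancellation in $S_q$ but from the \emph{vanishing} of $S_{p^k}(\c)$ outside certain congruence classes of $\dd=S^t\c$ (Lemma~\ref{lemmagaussrankdrop}). You already plan to import the exponential-sum lemmas from the proof of Theorem~\ref{propbh19}, so in practice this is only a matter of replacing your heuristic bound by Lemmas~\ref{lemmacs}--\ref{lemmaavg4}; the rest of your assembly then goes through and yields the two displayed terms of $\mathcal{E}(P)$.
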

It is interesting to note that while Theorem~\ref{propbh19} is useful in obtaining lower bounds, one can obtain good upper bounds for the number of 
integer solutions to quadratic polynomials using Theorem~\ref{propbh199}, provided that $\|F\|$ is not too large in terms of $P$. Removing this 
constraint would have striking applications to the Dimension Growth Conjecture for cubic hypersurfaces, which remains unsolved. 

\begin{ack}This work was commenced while I was a Visiting Fellow at the School of Mathematics in TIFR Mumbai, where I was supported by
a DST Swarnajayanti fellowship. I am currently supported by the grant KAW 2021.0282 from the Knut and Alice Wallenberg Foundation. 
Part of this work was also supported by the Swedish Research Council under grant
no. 2021-06594 while I participated in the programme in Analytic Number Theory at the Institute Mittag-Leffler in 2024.
\end{ack}

\section{Counting solutions using the smooth $\delta$ function}
Using~\cite[Theorem 3]{HB96} with $Q^2 = P^2$ to detect the equation $F(\x) = 0$, we get
\begin{equation}\label{eq:npw}
N(P,w) = \frac{1}{P^2}\sum_{\cc \in \ZZ^n}\sum_{q=1}^{\infty}q^{-n}S_q(\cc)I_q(\cc) + O(1),
\end{equation}
where 
\begin{equation*}
\begin{split}
S_q(\cc) &= \sumstar_{a \bmod{q}}\sum_{\b \bmod{q}}e_q(aF(\b) + \b.\cc) \\
I_q(\cc) &= \int_{\RR^n} w_Q(P^{-1}\x) h\left(\frac{q}{P},\frac{F(\x)}{P^2}\right)e_q(-\cc.\x)\, d\x.
\end{split}
\end{equation*}
\subsection{Estimates for $I_q(\cc)$}
Let $r = q/P$ and $\v = P\mathbf{c}/q$. We begin by writing
$$I_q(\cc) = P^nI_r^{*}(\v),$$ where 
\begin{align*}
I_r^{*}(\v) &= \int_{\RR^n}w_Q(\x)\hf{Q(\x)+\frac{L(\x)}{P}+\frac{N}{P^2}}e(-\x.\v)\,d\x
\end{align*}
and $w_Q(\x)$ is given in~\eqref{eq:wtbrd}.  Let $G(\x) = Q(\x) + L(\x)/P + N/P^2$. Observe that for $\x$ lying in the support of $w_Q$, we have 
$$
L(\x) \ll \|F\|^{3/2}+\|F\|^2\sum_{i=1}^{r}|\lambda_i|^{-1/2}.
$$ 
Since $\|F\|^{3/2}+\|F\|^2\sum_{i=1}^{r}|\lambda_i|^{-1/2} \ll P$ by the hypothesis of Theorem~\ref{propbh19}, we have that 
$
|G(\x)| \leq c_{w,F}
$
for some constant $c_{w,F} > 0$, whenever $\x \in \supp w_Q$. Let $U: \RR \to \RR$ be a smooth function supported on 
$[-1-c_{w,F},1+c_{w,F}]$ such that $U(x) = 1$ for $x \in [-c_{w,F},c_{w,F}]$. Then 
\begin{align}\label{eq:irvu}
I_r^*(\mathbf{v}) = \int_{\RR^n}w_Q(\x)\hf{G(\x)}U(G(\x))e(-\x.\v)\,d\x.
\end{align}

\begin{lemma}\label{lemmahard}
Let $\lambda_1,\ldots,\lambda_r$ be the non-zero eigenvalues of $Q$. Suppose that 
$$
\sup_{|\x| \leq A} |L(\x)| \ll_A \|F\|^2,
$$
for any $A \geq 1$. If $\cc \neq \mathbf{0}$ then we have the following estimates for $I_q(\mathbf{c})$. We have for any $N \geq 0$ that
\begin{equation}\label{lemmaparts}
I_q(\cc) \ll \Delta^{-1/2}\|F\|^{-(n-r)/2}\frac{P^{n+1}}{q}\frac{\|F\|^{\frac{N}{2}}}{|\cc|^N}.
\end{equation}
In addition, we have 
\begin{equation}\label{eq:hard}
I_q(\cc) \ll \Delta^{-1/2}\|F\|^{-n/2+3r/4-1/2}P^{n-\frac{r}{2}+1}q^{\frac{r}{2}-1}|\cc|^{-\frac{r}{2}+1}.
\end{equation}
\end{lemma}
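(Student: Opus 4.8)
The claim has two parts. Part \eqref{lemmaparts} is a decay-in-$|\cc|$ estimate via repeated integration by parts; part \eqref{eq:hard} is the stationary-phase / van der Corput type bound, uniform in $q$ and $\cc$. In both cases the starting point is the rescaled representation \eqref{eq:irvu}, together with the substitution $\y = R^t\x$, $\w = R^t\v$ which diagonalises $Q$: writing $Q(\x) = \sum_{i\le r}\lambda_i y_i^2$, the weight becomes a product $w_3(\y)$ separating the $r$ "curved" variables (each localised to $|\lambda_i|^{1/2} y_i \in \tau_i + O(c)$, so $y_i \asymp |\lambda_i|^{-1/2}$) from the $n-r$ "flat" variables (each localised to $\|F\|^{1/2} y_j \in \text{supp}\, w_0$, so $y_j \asymp \|F\|^{-1/2}$). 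Note the inner product $\x\cdot\v = \y\cdot\w$ is preserved, and the bounds on $L$ guarantee that the phase $G$ and all its derivatives behave like those of $Q$ up to lower-order terms, as already observed in the text preceding the lemma.

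\emph{Part \eqref{lemmaparts}.} After diagonalising, I factor out the volume of the support: the $y_i$-box has volume $\asymp \Delta^{-1/2}$, the $y_j$-box has volume $\asymp \|F\|^{-(n-r)/2}$, and the factor $P^{n+1}/q = P^n \cdot r^{-1}$ comes from $I_q(\cc) = P^n I_r^*(\v)$ together with the $h(r,\cdot)$ bound $h(r,x)\ll r^{-1}$ (valid since $h(r,x)$ is supported on $r\ll 1$ here, after truncating the $q$-sum at $q\ll P$, which is standard for this $\delta$-method). For the decay in $|\cc|$: integrate by parts $N$ times in whichever variable $y_k$ maximises $|w_k|$ (so that $|w_k|\gg|\w|\gg|\cc|$, using $|\v|=P|\cc|/q$ and orthogonality of $R$). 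Each integration by parts against $e(-\y\cdot\w)$ produces a factor $|w_k|^{-1}$ and moves a derivative onto $w_Q(\y)h(r,G(\y))U(G(\y))$. The $j$-th derivative in $y_k$ of this product is $\ll (\text{something})^{j}$ where "something" is the relevant frequency scale: for a flat variable, $\partial_{y_j}$ hitting $w_0(\|F\|^{1/2}y_j)$ costs $\|F\|^{1/2}$, while $\partial_{y_j}$ hitting $h(r,G)$ costs $|\partial_{y_j}G|/r \ll (\|F\|/P \cdot \|F\|^{1/2}\cdots)$ which is dominated by $\|F\|^{1/2}$ under the hypotheses on $P$; for a curved variable one similarly checks the dominant cost is $\|F\|^{1/2}$ (the chain rule through $h$ and the $w_3$ scaling both contribute at this level). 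Hence each integration by parts nets a factor $\|F\|^{1/2}/|\cc|$, giving the stated $(\|F\|^{1/2}/|\cc|)^N$ after $N$ steps; I should be slightly careful to also track that derivatives of $h$ in its first argument are harmless because $r\ll 1$.

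\emph{Part \eqref{eq:hard}.} Here one does not integrate by parts in $\cc$ but instead exploits oscillation in the phase $\theta\mapsto$ and in the $y_i$ from the curvature $\lambda_i y_i^2$. The cleanest route is to use the second-derivative (stationary phase) estimate variable by variable in the $r$ curved directions: after pulling out $h(r,\cdot)$ and using its Fourier-analytic description (or directly Heath-Brown's bounds on oscillatory integrals weighted by $h$), each curved integral $\int w_0(|\lambda_i|^{1/2}y_i)\,\phi\,e(\text{phase with }\lambda_i y_i^2/r)\,dy_i$ contributes a gain of order $(|\lambda_i|/r)^{-1/2} = (r/|\lambda_i|)^{1/2}$ over the trivial bound $|\lambda_i|^{-1/2}$, provided $|\lambda_i|/r \gg 1$; combined over the $r$ curved variables this produces $r^{r/2}\Delta^{-1/2}$ against the trivial volume, while the $n-r$ flat variables give $\|F\|^{-(n-r)/2}$ and, crucially, must be handled by integration by parts in $\cc$ as in Part \eqref{lemmaparts} to produce the $|\cc|^{-r/2+1}$ decay — one trades roughly $r/2-1$ integrations by parts against the flat/curved frequency scale $\|F\|^{1/2}$, yielding the factor $\|F\|^{(r/2-1)/2}\cdot\|F\|^{\cdots}$; bookkeeping of the exponents $-n/2+3r/4-1/2$ on $\|F\|$ and $n-r/2+1$ on $P$ and $r/2-1$ on $q$ should then fall out. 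I expect the main obstacle to be exactly this bookkeeping: ensuring the stationary-phase gain in the curved variables and the integration-by-parts decay in the remaining variables are applied to a consistent set of variables (one cannot both integrate by parts and apply stationary phase in the same direction), and that cross-terms from the chain rule through $h(r,G(\y))$ — whose argument mixes all variables via $L(\x)/P$ — do not destroy the clean product structure. A technically cleaner alternative, which I would fall back on, is to first reduce to the case $L = 0$, $N=0$ by absorbing $L(\x)/P + N/P^2$ into a shift and a bounded perturbation (legitimate since $\|F\|^2/P$ and $|N|/P^2$ are small by hypothesis), after which $G = Q$ exactly and all the oscillatory integral estimates are the classical ones for diagonal quadratic forms.
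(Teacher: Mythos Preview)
Your plan for \eqref{lemmaparts} is essentially the paper's: diagonalise via $R$, rescale by $D=\diag(|\lambda_i|^{-1/2},\|F\|^{-1/2})$ to extract the volume $\Delta^{-1/2}\|F\|^{-(n-r)/2}$, observe that the effective frequency $R^tD\v$ has size $\gg \|F\|^{-1/2}|\v|$, and integrate by parts. The paper shortcuts your derivative bookkeeping by citing \cite[Lemma~5]{HB96} directly.

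Your plan for \eqref{eq:hard} has a real gap in the mechanism producing the $|\cc|^{-r/2+1}$ decay. You propose stationary phase in the $r$ curved variables together with integration by parts in the $n-r$ flat variables. But if the rotated frequency $R^t\v$ has no component in the flat directions --- which is generic, and forced when $r=n$ --- integration by parts there yields nothing; and stationary phase in a curved variable with phase $t\lambda_i y_i^2 - w_i y_i$ gives only $|t\lambda_i|^{-1/2}$, \emph{independent} of $w_i$. So neither half of your split sees $|\cc|$ when $\cc$ points in curved directions. Your caveat about ``a consistent set of variables'' identifies exactly this obstruction, but your fallback of reducing to $L=N=0$ does not remove it: the problem is already present for a pure diagonal form.

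The paper's resolution is to take the Fourier expansion of $h(r,\cdot)U(\cdot)$ that you mention and make the split in the dual variable $t$ rather than in the spatial directions. Writing $p(t)$ for this transform one has $I_r^*(\v)=\|F\|^{-(n-r)/2}\Delta^{-1/2}\int p(t)J(t,\v)\,dt$ with $J(t,\v)=\int w_1(\x)e(tG(DR\x)-\x\cdot R^tD\v)\,d\x$. For $|t|\ll |\v|/\|F\|^{1/2}$ the linear phase dominates and \cite[Lemma~10]{HB96} gives $J(t,\v)\ll_M(\|F\|^{1/2}/|\v|)^M$; for $|t|\gg |\v|/\|F\|^{1/2}$ one applies Parseval as in \cite[Lemma~3.2]{HBP17} and one-variable van~der~Corput in the curved coordinates to get $J(t,\v)\ll |t|^{-r/2}$, uniformly in $\v$. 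The $|\cc|$-dependence then falls out entirely from the crossover point $|t|\asymp |\v|/\|F\|^{1/2}$ when the two estimates are integrated over $t$. This small-$t$/large-$t$ dichotomy is the missing idea.
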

\begin{proof}

Let $D \in \mat_{n\times n}$ denote the diagonal matrix 
$$
D = \diag (|\lambda_1|^{-1/2},\ldots,|\lambda_r|^{-1/2},\|F\|^{-1/2},\ldots,\|F\|^{-1/2}).
$$ 
Making the change of variables $R^t\x = \z$ in~\eqref{eq:irvu}, we get that
\begin{equation*}
\begin{split}
I_r^*(\v) &= \int_{\RR^n}w_3(\z)\hf{G(R\z)}e(-\z.R^t\v)\,d\z. \\
\end{split}
\end{equation*}
Recalling the definition of $D$, we get that
\begin{equation*}
\begin{split}
I_r^*(\v)&= \|F\|^{-(n-r)/2}\prod_{i=1}^{r}|\lambda_i|^{-1/2}\times \\
&\quad \quad \int_{\RR^n}w_1(\z)\hf{G(DR\z)}e(-\z.R^tD\v)\,d\z.
\end{split}
\end{equation*}

If $\cc \neq \mathbf{0}$, then it follows from the estimate $|\lambda_i| \ll \|F\|$ that $|R^tD\v| \gg \|F\|^{-1/2}|\v|$. Integration by parts and~\cite[Lemma 5]{HB96} yield the estimate
\begin{align*}
I_r^{*}(\v) &\ll_A \Delta^{-1/2}\|F\|^{-(n-r)/2}|\v|^{-A}r^{-1-A} \\
&\ll \Delta^{-1/2}\|F\|^{-(n-r)/2}r^{-1}\left(|\mathbf{c}|/\sqrt{\|F\|}\right)^{-A},
\end{align*}
for any $A > 0$. This completes the proof of~\eqref{lemmaparts}

Let $p(t)$ denote the Fourier transform of the compactly supported smooth function $g(x) = h(r,x)U(x)$. Then we have
\begin{align*}
I_r^{*}(\v) &= \|F\|^{-(n-r)/2}\Delta^{-1/2}\times \\
&\quad\int_{\RR} p(t)\int_{\RR^n} w_1(\x) e(tG(DR\x) - \x.R^tD\v)\, d\x \, dt \\
&= \|F\|^{-(n-r)/2}\Delta^{-1/2}\int_{\RR}p(t)J(t,\v)\, dt,
\end{align*}
where we have set
\begin{equation*}
J(t,\v) = \int_{\RR^n} w_1(\x) e(tG(DR\x) - \x.R^tD\v)\, d\x.
\end{equation*}
Appealing to~\cite[Lemma 10]{HB96}, we see that 
\begin{equation}\label{eq:smallt}
J(t,\v) \ll_M \|F\|^{\frac{M}{2}}|\v|^{-M}
\end{equation} 
  if $|t| \ll |\v|/\|F\|^{1/2}.$
In the remaining range we proceed as follows. Applying Parseval's theorem as in~\cite[Lemma 3.2]{HBP17} we get that 
$$
J(t,\v) \ll \left(\int_{\RR^n} |\widehat{w_1}(\x)|\, d\x\right)\sup_{\y \in \RR^n}\int_{[-1,1]^n}e(tG(DR\x) - \x.\y)\, d\x.
$$ 
It is easy to see that 
$\int_{\RR^n} |\widehat{w_1}(\x)|\, d\x \ll 1$. 
Observe that $Q(DR\x) = Q_{\sgn}(\x)$. As a result, 
\begin{align*}
J(t,\v) &\ll \prod_{i=1}^{r}\sup_{y \in \RR}\int_{-1}^1e(\frac{\lambda_i}{|\lambda_i|}tx^2+xy)\, dx\, \times \\
&\quad \prod_{i={r}+1}^n\sup_{y \in \RR}\int_{-1}^1e(xy)\, dx.
\end{align*}
Therefore, we obtain the bound
\begin{equation}\label{eq:jtv}
J(t,\v) \ll \min\left\{1,|t|^{-\frac{r}{2}}\right\}.
\end{equation}
By the preceding estimate and~\eqref{eq:smallt} we get for any integer $N > 0$ that
\begin{align*}
I_r^*(\v) &\ll \|F\|^{-(n-r)/2}\Delta^{-1/2}\left\{\|F\|^{\frac{N}{2}}\int_{|t| \ll |\v|/\|F\|^{1/2}} |\v|^{-N} \, dt\right. \\
&\left. \quad \quad +\int_{|t| \gg |\v|/\|F\|^{1/2}}|t|^{-\frac{r}{2}} \, dt\right\}.
\end{align*}
The estimate~\eqref{eq:hard} now follows by choosing $N$ as follows. Set $N = r/2$ if $r$ is even. If $r$ is odd, choose $N = \frac{r-1}{2}$ if $|\v| \ll \sqrt{Y}$ and $N = \frac{r+1}{2}$ otherwise. 
\end{proof}

\begin{lemma}\label{iq0}
Suppose that
$$
\sup_{|\x|\leq A} |L(\x)| \ll_A \|F\|^{3/2}+\|F\|^2\sum_{i=1}^{r}|\lambda_i|^{-1/2}. 
$$
for any $A \geq 1$. Let $0 < N < r/2-1$ be an integer. Suppose also that 
$$
\|F\|^{3/2}+\|F\|^2\sum_{i=1}^{r}|\lambda_i|^{-1/2} \leq P^{1-\eta} \text{ and } \quad |N| \leq P^{2-\eta}
$$for some $\eta > 0$. Then 
$$
I_q(\mathbf{0}) = \|F\|^{-(n-r)/2}\Delta^{-\frac{1}{2}}P^n\left(\sigma_{\infty}(Q_{\sgn},w_1) + O(P^{-\eta/2}) + O(q^NP^{-N})\right).
$$
Moreover,
$1 \ll \sigma_{\infty}(Q_{\sgn},w_1) \ll 1$.
\end{lemma}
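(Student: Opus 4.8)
The plan is to reduce $I_q(\0)$ to a one-dimensional average of the weight $h(r,\cdot)$ against a fibre density, and then invoke the approximate-identity property of $h$ from~\cite{HB96}. Setting $\v=\0$ in the change of variables of the proof of Lemma~\ref{lemmahard} --- the orthogonal substitution $R^t\x=\z$ followed by rescaling by $D=\diag(|\lambda_1|^{-1/2},\dots,|\lambda_r|^{-1/2},\|F\|^{-1/2},\dots,\|F\|^{-1/2})$ --- one gets
\begin{equation*}
I_q(\0)=\|F\|^{-(n-r)/2}\Delta^{-1/2}P^{n}\int_{\RR^n}w_1(\z)\,\hf{\Phi(\z)}\,d\z,\qquad \Phi(\z):=G(DR\z),
\end{equation*}
where $G(\x)=Q(\x)+L(\x)/P+N/P^2$ and $r=q/P$. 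Write $\Phi(\z)=Q_{\sgn}(\z)+\Lambda(\z)$ with $\Lambda(\z)=L(DR\z)/P+N/P^2$; since $\Lambda$ is affine in $\z$ and, by the hypotheses $\|F\|^{3/2}+\|F\|^2\sum_i|\lambda_i|^{-1/2}\le P^{1-\eta}$, $|N|\le P^{2-\eta}$ together with the bound on $\sup_{|\x|\le A}|L(\x)|$ (arguing as in the discussion before~\eqref{eq:irvu}, where the same bounds are applied to the values of $\Lambda$), we have $\|\Lambda\|_{C^1(\supp w_1)}\ll P^{-\eta}$, and the Hessian of $\Phi$ equals that of $Q_{\sgn}$, hence has rank $r$.

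Since $w_1$ is supported in a small neighbourhood of $\tau$ --- which, using homogeneity of $Q_{\sgn}$, we may take with $|\tau|=1$ --- and $c$ is small, $\nabla\Phi=\nabla Q_{\sgn}+O(P^{-\eta})$ is non-vanishing on $\supp w_1$. Hence, by the co-area formula, $\int_{\RR^n}w_1(\z)\,\hf{\Phi(\z)}\,d\z=\int_{\RR}\hf{u}\,\rho(u)\,du$, where $\rho(u)=\int_{\Phi^{-1}(u)}w_1\,d\mu_u$ is a smooth, compactly supported fibre density; and because the Hessian of $\Phi$ has rank $r$, a stationary-phase bound as in~\eqref{eq:jtv} gives $\widehat\rho(\theta)=\int_{\RR^n}w_1(\z)e(-\theta\Phi(\z))\,d\z\ll(1+|\theta|)^{-r/2}$. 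The key input is the approximate-identity property of the Heath--Brown weight (part of the analysis of the singular integral in~\cite{HB96}; see also~\cite[Proposition~1]{BD08}): for smooth compactly supported $g$ with $\widehat g(\theta)\ll_N(1+|\theta|)^{-N-1-\delta}$ one has $\int_{\RR}\hf{u}\,g(u)\,du=g(0)+O_N(r^N)$. Applying this with $g=\rho$ is legitimate precisely when $N<r/2-1$ --- the hypothesis of the lemma --- and gives $\int_{\RR^n}w_1(\z)\hf{\Phi(\z)}\,d\z=\rho(0)+O_N(r^N)$ with $r^N=q^NP^{-N}$; Fourier inversion identifies $\rho(0)=\int_{\RR}\widehat\rho(\theta)\,d\theta$ with the singular integral $\int_{\RR}\int_{\RR^n}w_1(\z)e(-\theta\Phi(\z))\,d\z\,d\theta$ of $\Phi$.

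It remains to pass from $\Phi$ to $Q_{\sgn}$. Writing the difference of singular integrals as $\int_{\RR}\int_{\RR^n}w_1(\z)e(-\theta Q_{\sgn}(\z))\bigl(e(-\theta\Lambda(\z))-1\bigr)\,d\z\,d\theta$, I would integrate by parts in $\z$ against the field $\nabla Q_{\sgn}/|\nabla Q_{\sgn}|^2$ (non-vanishing on $\supp w_1$): each integration trades a factor of $\theta$ for a first-order differential operator applied to $w_1\bigl(e(-\theta\Lambda)-1\bigr)$, whose relevant $C^k$-norms are $\ll_k P^{-\eta}$ since $\|\Lambda\|_{C^1}\ll P^{-\eta}$, so after enough integrations the integrand decays like $P^{-\eta}(1+|\theta|)^{-2}$ and the difference is $O(P^{-\eta})\subseteq O(P^{-\eta/2})$. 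Combining the three steps yields the claimed formula for $I_q(\0)$. Finally, by Fourier inversion $\sigma_\infty(Q_{\sgn},w_1)=\int_{\{Q_{\sgn}=0\}}w_1\,d\mu$ is the Leray measure of the hypersurface $\{Q_{\sgn}=0\}$; as $0\le w_1\le 1$ has bounded support this is $\ll 1$, and as $w_1\equiv 1$ on a fixed cube around $\tau$, which (since $|\tau|=1$) contains a neighbourhood of $\tau$ within the smooth hypersurface $\{Q_{\sgn}=0\}$ of Leray measure $\gg 1$, it is $\gg 1$; all constants depend only on $n$, $w_0$ and the signature of $Q$, hence are uniform in the coefficients of $F$. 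The main obstacle is ensuring the fibre density $\rho$ (equivalently $\widehat\rho(\theta)$) has $(1+|\theta|)^{-r/2}$ decay uniformly in the coefficients, so that the weight-$h$ approximation applies throughout the range $N<r/2-1$; this is where $r\ge 5$, the smallness of $\Lambda$, and the localisation of $\supp w_1$ near the non-singular zero $\tau$ of $Q_{\sgn}$ are crucial.
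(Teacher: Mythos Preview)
Your argument is essentially the paper's proof recast in geometric language: the co-area formula together with the approximate-identity property of $h$ is exactly the content of the paper's Fourier-side computation using $L(\theta)=\int U(t)h(r,t)e(\theta t)\,dt=1+O_N(r^N(1+|\theta|^N))$ (from~\cite[Eqs.~2.9--2.10]{BH19}) paired with the decay $J(\theta,\0)\ll\min(1,|\theta|^{-r/2})$; your $\widehat\rho(\theta)$ is precisely the paper's $J(\theta,\0)$, and the constraint $N<r/2-1$ arises for the same reason in both.

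The one place your write-up needs tightening is the comparison of the singular integrals of $\Phi$ and $Q_{\sgn}$. The $C^k$-norms of $w_1\bigl(e(-\theta\Lambda)-1\bigr)$ are \emph{not} $\ll_k P^{-\eta}$ uniformly in $\theta$: differentiating $e(-\theta\Lambda)$ produces factors $\theta\,\partial_j\Lambda\asymp|\theta|P^{-\eta}$, so after $k$ integrations by parts the amplitude is only $\ll|\theta|^{1-k}P^{-\eta}$ on the range $|\theta|\le P^{\eta}$, and for $|\theta|\gg P^{\eta}$ the bound degenerates and the claimed decay $P^{-\eta}(1+|\theta|)^{-2}$ does not follow from the non-stationary-phase step alone. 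The fix is immediate --- on the tail $|\theta|\gg P^{\eta}$ use instead the stationary-phase bound $\ll|\theta|^{-r/2}$, which you already have for both $\Phi$ and $Q_{\sgn}$, and which integrates to $O(P^{-\eta(r-2)/2})$. This is effectively what the paper does: it truncates explicitly at $|\theta|\le P^{\eta}$, Taylor-expands $e(-\theta\Lambda)$ on the truncated range, and handles the tail via the $|\theta|^{-r/2}$ decay.
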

\begin{proof}
We begin by observing that the singular integral satisfies the bounds $1 \ll \sigma_{\infty}(Q_{\sgn},w_1) \ll 1$ 
by construction of the weight function $w_1$. We will now show that the expression for $I_q(\mathbf{0})$ holds.

We have
$
I_q(\mathbf{0}) = P^n I_r^*(\mathbf{0}),
$
with
\begin{align*}
I_r^*(\mathbf{0}) &= \int_{\RR^n}w_Q(\x)\hf{Q(\x)+\frac{L(\x)}{P}+\frac{N}{P^2}}\,d\x \\
&= \|F\|^{-(n-r)/2}\prod_{i=1}^{r}|\lambda_i|^{-1/2}\times \\
&\quad \int_{\RR^n}w_1(\x)\hf{Q_{\sgn}(\x) + \frac{L(DR\x)}{P} + \frac{N}{P^2}} \, d\x,
\end{align*}
where we have used the definition of $w_Q$ from~\eqref{eq:wtbrd} in the last line.

For $\theta \in \RR$ let
$$
L(\theta) = \int_{\RR}U(t)h(r,t)e(\theta t)\,dt.
$$
Then by~\cite[Equation 2.10]{BH19}, we have for any $N > 0$ that
\begin{equation*}
L(\theta) = 1 + O_N((q/P)^N) + O_N((q/P)^N|\theta|^N).
\end{equation*}
Also let
\begin{align*}
J(\theta,\mathbf{0}) &= \int_{\RR^n}w_1(\x)e\left(-\theta \left(Q_{\sgn}(\x) + \frac{L(DR\x)}{P} + \frac{N}{P^2}\right)\right) \, d\x \,.
\end{align*}
Appealing to~\cite[Equation 2.9]{BH19}, we deduce that
\begin{align*}
I_r^*(\mathbf{0}) &= \|F\|^{-(n-r)/2}\prod_{i=1}^{r}|\lambda_i|^{-1/2}\lim_{\delta \downarrow 0}\int_{\RR}\left(\frac{\sin (\pi \delta \theta)}{\pi \delta \theta}\right)^2J(\theta,\mathbf{0})L(\theta)\, d\theta. 
\end{align*}
As a result, for any $N <r/2-1$ we have that
\begin{align*}
&= \|F\|^{-(n-r)/2}\Delta^{-1/2}\lim_{\delta \downarrow 0}\int_{\RR}\left(\frac{\sin (\pi \delta \theta)}{\pi \delta \theta}\right)^2J(\theta,\mathbf{0}) \, d\theta \,+ \\
&\quad 
O\left((q/P)^N\|F\|^{-(n-r)/2}\Delta^{-1/2}\lim_{\delta \downarrow 0}\int_{\RR}(1+|\theta|^N)\min\left\{1,|\theta|^{-r/2}\right\}\, d\theta \right) \\
&=  \|F\|^{-(n-r)/2}\Delta^{-1/2}\lim_{\delta \downarrow 0}\int_{\RR}\left(\frac{\sin (\pi \delta \theta)}{\pi \delta \theta}\right)^2J(\theta,\mathbf{0}) \, d\theta \,+ \\
&\quad \quad O\left(\|F\|^{-(n-r)/2}\Delta^{-1/2}q^N P^{-N}\right).
\end{align*}
Letting $\delta \to 0$ we get that
\begin{equation}\label{eq:24411}
\begin{split}
I_r^*(\mathbf{0}) &= \|F\|^{-(n-r)/2}\Delta^{-1/2}\int_{\RR}J(\theta,\mathbf{0})\, d\theta \, \\
&\quad+ O\left(\|F\|^{-(n-r)/2}\Delta^{-1/2}q^N P^{-N}\right).
\end{split}
\end{equation}
By~\eqref{eq:jtv}, we have that
$$
\int_{\RR} J(\theta,\mathbf{0}) \, d\theta = \int_{|\theta|\leq P^{\eta}}J(\theta,0) \, d\theta + O(P^{-\eta(r/2-1)}).
$$
As a result, 
\begin{align*}
\int_{\RR} J(\theta,\mathbf{0}) \, d\theta &=\int_{\RR^n}w_1(\x) \times\\
&\quad\int_{|\theta|\leq P^{\eta}}e\left(-\theta \left(Q_{\sgn}(\x) + \frac{L(DR\x)}{P} + \frac{N}{P^2}\right)\right) \, d\theta \, d\x \\
& \quad\quad +O(p^{-\eta(r/2-1)}).
\end{align*}
By the hypothesis of the lemma, we have that 
$$
L(DR\x)/P + N/P^2 \ll P^{-\eta},
$$ 
for 
any $\x \in \supp w_1$. As a result, 
$$
e(-\theta (Q_{\sgn}(\x) + L(DR\x)/P + N/P^2)) = e(-\theta Q_{\sgn}(\x))(1 + O(P^{-\eta})),
$$
whence 
$$
\int_{|\theta| \leq P^{\eta}} J(\theta,\mathbf{0})\, d\theta = \int_{|\theta| \leq P^{\eta}/2}\int_{\RR^n}w_1(\x)e(-\theta Q_{\sgn}(\x))\, d\theta \, d\x + O(P^{-\eta/2}).
$$
Notice that the argument used to deduce~\eqref{eq:jtv} also shows that
$$
\int_{\RR^n}w_1(\x)e(-\theta Q_{\sgn}(\x))\, \ll \min\left\{1,|\theta|^{-r/2}\right\}.
$$
Therefore, we have
$$
\int_{\RR} J(\theta,\mathbf{0}) \, d\theta = \int_{\RR}\int_{\RR^n}w_1(\x)e(-\theta Q_{\sgn}(\x))\, d\theta \, d\x + O(P^{-\eta/2}).
$$
As $I_q(\mathbf{0}) = P^n I_r^*(\mathbf{0})$, by~\eqref{eq:24411} we may conclude for any $0 < N < r-1/2$ that
$$
I_q(\mathbf{0}) = \|F\|^{-(n-r)/2}\Delta^{-\frac{1}{2}}P^n\left(\sigma_{\infty}(Q_{\sgn},w_1) + P^{-\eta/2} + q^NP^{-N}\right).
$$
This completes the proof of the lemma.
\end{proof}

\subsection{Analysis of the exponential sum}
In this section, we will analyse the exponential sum $S_q(\cc)$. We begin by recording some notation that will be used repeatedly in this
section. 

Let $r$ denote the rank of the quadratic form $Q(\x) = \x^tM\x$. 
Completing squares, we find that there exists a matrix $S \in \sln(\QQ)$ such that $S^tMS = \diag(\beta_1,\ldots,\beta_{r},0,\ldots,0),$ where $\beta_i$ are non-zero integers. Let 
\begin{equation}\label{eq:dualquadform}
Q^*(\x) = \x^tS^{-1}\diag(\beta_1^{-1},\ldots,\beta_{r}^{-1},0,\ldots,0)(S^t)^{-1}\x
\end{equation} 
denote the quadratic form dual to $Q(\x)$.
Put
$\mathcal{D} = \prod_{i=1}^{r}\beta_i$. Then $\mathcal{D} \ll \|F\|^{r}$. 

Clearing denominators, we may suppose that $S$ has integer entries (with a potentially different set of $\beta_i)$. Then if $p  \nmid 2\mathcal{D}$, we have for any $k \geq 1$ that $S^tMS \equiv \diag(\beta_1,\ldots,\beta_{r},0,\ldots,0) \bmod{p^k}$, with $(\beta_i,p)=1$. Also recall $Q^*(\x)$ from~\eqref{eq:dualquadform}. 
We begin by recording the following multiplicativity result.
\begin{lemma}\label{multlemma}
Let $q = uv$ with $(u,v) = 1$. Then $$S_q(\cc) = S_u(\overline{v}\cc)S_v(\overline{u}\cc),$$ where $\overline{u}$ (resp. $\overline{v}$) is the multiplicative inverse of $u$ (resp. $v$) modulo $v$ (resp. $u$). 
\end{lemma}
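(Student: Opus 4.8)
The plan is to carry out the usual Chinese Remainder Theorem factorisation of the complete exponential sum; there is no genuine obstacle here, only some bookkeeping of multiplicative inverses.

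First I would record the congruence $\overline{v}v+\overline{u}u\equiv 1\pmod{uv}$, equivalently $\tfrac{1}{uv}\equiv\tfrac{\overline{v}}{u}+\tfrac{\overline{u}}{v}\pmod 1$, which yields the factorisation $e_q(x)=e_u(\overline{v}x)\,e_v(\overline{u}x)$ for every integer $x$. Since $(u,v)=1$, reduction modulo $u$ and modulo $v$ gives a bijection between $\b\bmod q$ and pairs $(\b_1\bmod u,\b_2\bmod v)$, and identifies $\{a\bmod q:(a,q)=1\}$ with $\{a_1\bmod u:(a_1,u)=1\}\times\{a_2\bmod v:(a_2,v)=1\}$; write $a\equiv a_1,\ \b\equiv\b_1\pmod u$ and $a\equiv a_2,\ \b\equiv\b_2\pmod v$. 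As $F$ has integer coefficients, $F(\b)\equiv F(\b_1)\pmod u$ and $F(\b)\equiv F(\b_2)\pmod v$, so the summand factorises as
$$
e_q\big(aF(\b)+\b\cdot\cc\big)=e_u\big(\overline{v}(a_1F(\b_1)+\b_1\cdot\cc)\big)\,e_v\big(\overline{u}(a_2F(\b_2)+\b_2\cdot\cc)\big),
$$
and therefore $S_q(\cc)$ is the product of a sum over $(a_1,\b_1)$ modulo $u$ and a sum over $(a_2,\b_2)$ modulo $v$.

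Finally, in the first factor I would replace $a_1$ by $va_1$, which is a bijection of $(\ZZ/u\ZZ)^\times$ because $(v,u)=1$; this turns $e_u(\overline{v}a_1F(\b_1))$ into $e_u(a_1F(\b_1))$ while the term $e_u(\overline{v}(\b_1\cdot\cc))=e_u(\b_1\cdot(\overline{v}\cc))$ is unchanged, so the first factor is precisely $S_u(\overline{v}\cc)$. The symmetric substitution replacing $a_2$ by $ua_2$ in the second factor produces $S_v(\overline{u}\cc)$, and multiplying the two gives the claim. The only things worth noting are that the linear and constant parts of $F$ play no special role — all the reductions above are valid for an arbitrary polynomial with integer coefficients — and that one must read each $\overline{\,\cdot\,}$ consistently as an inverse modulo $u$ or modulo $v$ according to where it appears.
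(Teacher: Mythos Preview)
Your argument is correct and is exactly the standard Chinese Remainder Theorem factorisation one expects here. The paper itself does not supply a proof of this lemma --- it merely ``records'' the multiplicativity as a known fact --- so there is nothing to compare against; your write-up would serve perfectly well as the omitted proof.
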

Next we record the following estimate for $S_q(\cc)$.
\begin{lemma}\label{lemmacs}
For any $q \geq 1$, we have $$S_q(\cc) \leq (q^{r},\mathcal{D})^{\frac{1}{2}}q^{1+n-\frac{r}{2}}.$$
\end{lemma}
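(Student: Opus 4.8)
The plan is to reduce the estimate to a local computation prime by prime, using the multiplicativity recorded in Lemma~\ref{multlemma}, and then to bound the individual prime-power factors $S_{p^k}(\cc)$ by splitting into the tame case $p \nmid 2\mathcal{D}$ and the finitely many wild primes $p \mid 2\mathcal{D}$. Writing $q = \prod_p p^{k_p}$ and applying Lemma~\ref{multlemma} repeatedly, it suffices to prove that for each prime power $p^k$ one has $|S_{p^k}(\cc)| \le (p^{kr}, p^{v_p(\mathcal{D})})^{1/2} \, p^{k(1+n-r/2)}$, since the claimed bound is multiplicative in $q$: the factor $q^{1+n-r/2}$ is completely multiplicative, and $(q^r,\mathcal{D})^{1/2}$ is at least $\prod_p (p^{kr}, p^{v_p(\mathcal{D})})^{1/2}$ (in fact equal to it). So everything comes down to a single prime power.

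For the tame primes $p \nmid 2\mathcal{D}$, I would perform the substitution $\b = S\y$ (legitimate mod $p^k$ since $S \in \sln(\QQ)$ has $p$-integral entries with $p$-integral inverse here) so that $F(\b)$ becomes, up to the linear and constant terms, the diagonal form $\sum_{i=1}^r \beta_i y_i^2$ with $(\beta_i, p) = 1$. The sum over $\b$ then factors as a product of $n$ one-dimensional sums. The $r$ diagonal Gauss sums $\sum_{y \bmod{p^k}} e_{p^k}(a\beta_i y^2 + (\text{linear}))$ each have modulus $p^{k/2}$ for $p$ odd (standard Gauss sum evaluation, completing the square since $(2a\beta_i, p) = 1$), while the remaining $n-r$ variables, appearing only linearly, each contribute either $p^k$ or $0$. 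Summing over the $p^k$ many choices of $a$ (not just the units, but the unit sum $\sumstar$ only helps) gives at most $p^k \cdot p^{kr/2} \cdot p^{k(n-r)} = p^{k(1+n-r/2)}$, which matches the bound with the gcd factor equal to $1$. The prime $p = 2$ in the tame range costs at most a bounded extra power and is absorbed; one can also simply fold $2$ into the wild set.

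For the wild primes $p \mid 2\mathcal{D}$, the cleanest route is the trivial bound combined with a nondegeneracy gain. Trivially $|S_{p^k}(\cc)| \le p^{k} \cdot p^{kn} = p^{k(1+n)}$, but the rank-$r$ part of the form, after the rational diagonalisation, introduces a saving: over $\ZZ_p$ the form $Q$ is $\gl_n(\ZZ_p)$-equivalent to $\sum_{i=1}^r \gamma_i x_i^2$ with $v_p(\gamma_i) \le v_p(\mathcal{D})$, and for each such variable the complete Gauss sum $\sum_{y \bmod{p^k}} e_{p^k}(a\gamma_i y^2)$ has modulus at most $p^{k/2} (p^{k}, p^{v_p(\gamma_i)})^{1/2}$ by the standard estimate for quadratic Gauss sums with a $p$-power in the coefficient; multiplying the $r$ factors and using $\sum_i v_p(\gamma_i) \le v_p(\mathcal{D}^r)$ bounded appropriately gives the $(p^{kr}, p^{v_p(\mathcal{D})})^{1/2}$ factor, while the $n-r$ linear variables and the $a$-sum give $p^{k(1+n-r)}$ as before. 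The main obstacle is bookkeeping at the wild primes: one must be careful that clearing denominators in $S$ does not inflate $\mathcal{D}$ beyond $O(\|F\|^r)$, and that the $p$-adic diagonalisation of $Q$ genuinely produces diagonal coefficients whose valuations sum to at most $v_p(\mathcal{D})$ — this is where the precise relationship between the global $\beta_i$ and the local structure of $M$ at $p$ has to be pinned down, and it is the step I would expect to require the most care.
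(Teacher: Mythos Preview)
Your approach via multiplicativity and local Gauss-sum estimates is genuinely different from the paper's. The paper does not localise at all: it applies Cauchy--Schwarz directly to $S_q(\cc)$, obtaining
\[
|S_q(\cc)|^2 \le \phi(q)\sumstar_{a\bmod q}\sum_{\uu,\v\bmod q}e_q\bigl(a(F(\uu)-F(\v))+(\uu-\v)\cdot\cc\bigr),
\]
then substitutes $\w=\uu-\v$ so that the inner $\v$-sum detects the congruence $\nabla Q(\w)\equiv \mathbf{0}\bmod q$. Putting $M$ into Smith normal form $A^tMB=\diag(\alpha_1,\dots,\alpha_r,0,\dots,0)$ with $A,B\in\sln(\ZZ)$ converts this into $w_i\equiv 0\bmod q/(q,\alpha_i)$ for $1\le i\le r$, and trivially bounding the remaining sum over $a$ and $\w$ gives the factor $\prod_i(q,\alpha_i)\le(q^r,\mathcal{D})$. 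The Weyl differencing kills the linear and constant parts of $F$ entirely, and the Smith form handles every modulus $q$ at once, with no tame/wild case split.

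Your route is fine at the tame primes, but the wild-prime step is more than bookkeeping, and as written it is not complete. The $p$-adic diagonal entries $\gamma_i$ arise from a \emph{congruence} transformation $T^tMT$ with $T\in\mathrm{GL}_n(\ZZ_p)$, whereas $\mathcal{D}=\prod\beta_i$ is defined via a global $S$ that need not be $p$-unimodular at the wild primes; the product $\prod_i\gamma_i$ is only an invariant up to a $p$-adic square, and you have not supplied the argument tying $\sum_i v_p(\gamma_i)$ to $v_p(\mathcal{D})$. This can be repaired (for instance by bringing in the Smith invariants $\alpha_i$, which do divide $\mathcal{D}$ by Cauchy--Binet since $S$ has integer entries), but it is precisely the global Smith form applied after Cauchy--Schwarz---an \emph{equivalence} $A^tMB$ rather than a congruence---that makes the comparison with $\mathcal{D}$ automatic and renders the whole local analysis unnecessary.
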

\begin{proof}
By Cauchy-Schwarz we get
$$|S_q(\cc)|^2 \leq \phi(q) \sumstar_{a\bmod{q}}\sum_{\uu,\v \bmod{q}}e_q(a\left\{F(\uu)-F(\v)\right\}+(\uu-\v).\cc).$$
Make the change of variables $\uu-\v = \w$. Then we get
$$|S_q(\cc)|^2 \leq \phi(q) \sumstar_{a\bmod{q}}\sum_{\w \bmod{q}}e_q(aQ(\w)+\w.\cc)\sum_{\v \bmod{q}}e_q(a\nabla Q(\w).\v).$$
There exist $A, B \in \sln(\ZZ)$ so that $M$ can be put in Smith canonical form, i.e. $A^tMB = \diag(\alpha_1,\ldots,\alpha_{r},0,\ldots,0)$, for some non-zero integers $\alpha_i$. Replacing $\v$ by $A\v$ and $\w$ by $B\w$ we get
$$|S_q(\cc)|^2 \leq \phi(q)q^n \sumstar_{a\mod{q}}\sumflat_{\w \bmod{q}}e_q(a\w^tB^tMB\w+\w.B^t\cc),$$
where $\sumflat$ denotes restriction to $\w$ such that $w_i \equiv 0 \bmod{q/(q,\alpha_i)}$ for $1 \leq i \leq r$. The result follows.
\end{proof}

\begin{lemma}\label{lemmagaussrankdrop}
Suppose that $p \nmid 2\mathcal{D}$. Let $\bs{l}$ be a vector such that $L(\x) = \bs{l}.\x$, where $L(\x)$ is as in~\eqref{eq:fqln}. Set $\dd = S^t \mathbf{c}$ 
and $\mathbf{m} = S^t \bs{l}$. Then
\begin{enumerate}

\item $S_{p^k}(\cc)$ vanishes unless $v_p(d_i) = v_p(m_i)$ for each $r < i \leq n$.

\item If $(m_{r+1},\ldots,m_n)=\mathbf{0}$, then $S_{p^k}(\cc)$ vanishes unless $d_j \equiv 0 \bmod{p^k}$ for $r+1 \leq j \leq n$, in which case, we get
$$|S_{p^k}(\cc)| = \begin{cases}p^{k(n-\frac{r}{2})}|S(4N-Q^*(\bs{l}), Q^*(\cc);p^k)| &\mbox{ if $2 \mid kr$} \\ 
p^{k(n-\frac{r}{2})}|T(4N-Q^*(\bs{l}),Q^*(\cc);p^k)| &\mbox{ if $2 \nmid kr$},\end{cases}$$ where
$$S(m,n;q) = \sumstar_{x \bmod{q}}e_q(mx+n\overline{x})$$ to be a Kloosterman sum and let $$T(m,n;q) = \sumstar_{x \bmod{q}}\jacobi{x}{q}e_q(mx+n\overline{x})$$ be a 
Sali\'{e} sum. 
\item Let $J \subset \{r+1,\ldots,n\}$ such that $m_j \neq 0$ whenever $j \in J$. Assume that $v_p(m_j) = 0$ for $j \in J$. Then $$|S_{p^k}(\cc)| \leq p^{k(n-\frac{r}{2})}.$$
Furthermore, there exist congruence classes $a(j,p^k)$ modulo $p^k$ for $j \in J$ such that $S_{p^k}(\mathbf{c})$ vanishes unless $d_j \equiv a(j,p^k) \bmod{p^k}$ for each $r+1 \leq j \leq n $. 
\end{enumerate}
\comment{Then for any $\ve > 0$ we have the bound
$$|S_{p^k}(\cc)| \ll p^{k(n-\frac{r}{2}+\mathcal{D})+k\ve}$$ 
where
\begin{align*}
\mathcal{D} = \begin{cases} 1 &\mbox{ $N = 0$ and $\widehat{F}(\cc) \equiv 0 \bmod{p}$} \\ 
0 &\mbox{ $N = 0$ and $\widehat{F}(\cc) \not\equiv 0 \bmod{p}$} \\
0 &\mbox{ $N \neq 0$ and $\widehat{F}(\cc) \equiv 0 \bmod{p}$} \\
\frac{1}{2} &\mbox{ $N \neq 0$ and $\widehat{F}(\cc) \not\equiv 0 \bmod{p}$},\end{cases} 
\end{align*}
if $r$ is even. If $r$ is odd, then $S_p(\cc)$ vanishes if $N = 0$ and $\widehat{F}(\cc) \equiv 0 \bmod{p}$. If this not the case, then $\mathcal{D} = \frac{1}{2}$.}

\end{lemma}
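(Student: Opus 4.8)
The plan is to diagonalise $Q$ modulo $p^k$ and factor $S_{p^k}(\cc)$ into one–dimensional sums. Since $p\nmid 2\mathcal{D}$ we may take $S\in\sln(\ZZ)$ with $S^tMS\equiv\diag(\beta_1,\dots,\beta_r,0,\dots,0)\bmod{p^k}$ and $(\beta_i,p)=1$, and $\det S=1$, so $\b\mapsto S\b$ permutes $(\ZZ/p^k\ZZ)^n$. Substituting $\b=S\y$ turns $F(\b)=Q(\b)+L(\b)+N$ into $\sum_{i=1}^r\beta_i y_i^2+\m.\y+N$ and $\b.\cc$ into $\dd.\y$, with $\m=S^t\bs{l}$, $\dd=S^t\cc$, whence
\[
S_{p^k}(\cc)=\sumstar_{a\bmod{p^k}}e_{p^k}(aN)\prod_{i=1}^r\Big(\sum_{y_i\bmod{p^k}}e_{p^k}\big(a\beta_i y_i^2+(am_i+d_i)y_i\big)\Big)\prod_{j=r+1}^n\Big(\sum_{y_j\bmod{p^k}}e_{p^k}\big((am_j+d_j)y_j\big)\Big).
\]
The $j>r$ sums equal $p^k$ or $0$ according as $am_j+d_j\equiv 0\bmod{p^k}$ or not, and the $i\le r$ sums are quadratic Gauss sums, to be evaluated by completing the square (valid since $p\nmid 2a\beta_i$). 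For (1), a non-zero contribution requires a unit $a$ with $am_j\equiv -d_j\bmod{p^k}$ for every $j>r$; considering one such $j$ already forces $\min(v_p(d_j),k)=\min(v_p(m_j),k)$, i.e.\ $v_p(d_j)=v_p(m_j)$.

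For (2), the hypothesis $m_{r+1}=\dots=m_n=0$ turns the $j>r$ conditions into $d_j\equiv 0\bmod{p^k}$, and granting these the $j>r$ factors contribute $p^{k(n-r)}$. Completing the square in $y_i$ ($i\le r$) gives a factor $\jacobi{a\beta_i}{p}^{k}$ times a quantity of modulus $p^{k/2}$ depending only on $p,k$, and the phase $e_{p^k}\!\big(-\overline{4a\beta_i}(am_i+d_i)^2\big)$. Multiplying over $i$: the moduli give $p^{kr/2}$, the Jacobi symbols give $\jacobi{a}{p}^{kr}$ times an $a$-independent unit, and expanding $(am_i+d_i)^2$ and summing over $i$ collects the phase into $e_{p^k}\!\big(a(N-\tfrac14 Q^*(\bs{l}))-\overline a\,\tfrac14 Q^*(\cc)-\tfrac12 B^*(\bs{l},\cc)\big)$, where $B^*$ is the bilinear form attached to the dual form $Q^*$ (identifying $\sum_i\beta_i^{-1}d_i^2$ with $Q^*(\cc)$ and $\sum_i\beta_i^{-1}m_i^2$ with $Q^*(\bs{l})$). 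After the substitution $a\mapsto 4a$ and pulling out the $a$-independent unit factors, this gives
\[
|S_{p^k}(\cc)|=p^{k(n-r/2)}\Big|\sumstar_{a\bmod{p^k}}\jacobi{a}{p}^{kr}e_{p^k}\!\big(a(4N-Q^*(\bs{l}))-\overline a\,Q^*(\cc)\big)\Big|,
\]
which is $p^{k(n-r/2)}|S(4N-Q^*(\bs{l}),Q^*(\cc);p^k)|$ if $kr$ is even and $p^{k(n-r/2)}|T(4N-Q^*(\bs{l}),Q^*(\cc);p^k)|$ if $kr$ is odd, as claimed.

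For (3), fix $j_0\in J$; since $v_p(m_{j_0})=0$, the condition $am_{j_0}+d_{j_0}\equiv 0\bmod{p^k}$ pins $a$ down uniquely modulo $p^k$, and $S_{p^k}(\cc)=0$ unless that $a$ is a unit. With $a$ thus fixed there is no remaining sum over $a$: the $j>r$ factors still give $p^{k(n-r)}$, imposing on each $d_j$ ($r<j\le n$) a congruence $d_j\equiv a(j,p^k)\bmod{p^k}$, and the $i\le r$ Gauss sums contribute modulus $p^{kr/2}$; hence $|S_{p^k}(\cc)|\le p^{k(n-r)}p^{kr/2}=p^{k(n-r/2)}$.

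The main obstacle is the book-keeping in (2): performing the simultaneous completion of squares in all $r$ non-degenerate variables, verifying that the accumulated constant term is (a unit multiple of) $a(4N-Q^*(\bs{l}))-\overline a\,Q^*(\cc)$ modulo $p^k$ — i.e.\ correctly matching the quadratic expressions in $\dd$ and $\m$ against the dual form $Q^*$ — and treating the quadratic Gauss sum uniformly in $k$, so that the leftover $\jacobi{a}{p}^{kr}$ cleanly separates the Kloosterman case ($2\mid kr$) from the Salié case ($2\nmid kr$).
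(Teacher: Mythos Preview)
Your proof is correct and follows essentially the same approach as the paper: diagonalise $Q$ modulo $p^k$ via the change of variables $\b\mapsto S\b$, factor the resulting sum into one-dimensional Gauss sums for $i\le r$ and additive characters for $i>r$, then complete the square in the non-degenerate variables to produce the Kloosterman/Sali\'{e} dichotomy. The only cosmetic difference is that you expand $(am_i+d_i)^2$ explicitly and name the bilinear cross-term $B^*(\bs{l},\cc)$, whereas the paper packages this as $Q^*(\cc+a\bs{l})$; the content is identical.
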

\begin{proof}
Recall that
\begin{equation*}
S_{p^k}(\cc) = \sumstar_{a \bmod{p^k}}e_{p^k}(aN)\sum_{\b \bmod{p^k}}e_{p^k}(a\left\{Q(\b)+L(\b)\right\}+\b.\cc).
\end{equation*}
Since $p \nmid 2\mathcal{D}$, we see that 
$S^tMS \equiv \diag(\beta_1,\ldots,\beta_{r},0,\ldots,0)$ modulo $p^k$, with $(p,\beta_i)=1$. Note that $\beta_i$ is unique up to multiplication by squares.
Replacing $\b$ by $S\b$ in $S_q(\cc)$ we get
\begin{align*}
S_{p^k}(\cc) &= \sumstar_{a \bmod{p^k}}e_{p^k}(aN)\sum_{\b \bmod{p^k}}e_{p^k}(a\left\{Q(S\b)+L(S\b)\right\}+\b.S^t\cc) \\
&= \sumstar_{a \bmod{p^k}}e_{p^k}(aN)\prod_{i=1}^{r}\sum_{b_i \bmod{p^k}}e_{p^k}(a(\beta_i b_i^2 + m_ib_i) + b_i d_i) \times \\
&\quad \quad \prod_{i=r+1}^n \sum_{b_i \bmod{p^k}} e_{p^k}(a m_ib_i + b_id_i) \\
&= \sumstar_{a \bmod{p^k}}e_{p^k}(aN) S_1(a) S_2(a),
\end{align*}
say. 
We have
\begin{align*}
S_1(a) = \jacobi{\mathcal{D}}{p^k}p^{\frac{kr}{2}}\jacobi{a}{p}^{kr}e_{p^k}\left(-\overline{4a}\sum_{i=1}^r\overline{\beta_i}(d_i+am_i)^2\right).
\end{align*}
Turning to $S_2(a)$, we see that it vanishes unless $am_i+d_i \equiv 0 \bmod{p^k}$ for $r+1\leq i \leq n$, and if this is the case, we get $S_2(a) = p^{n-k}$ and the first statement of the lemma follows. If $(m_{r+1},\ldots,m_n) = \bs{0}$, then
\begin{align*}
|S_{p^k}(\cc)| &= p^{k(n-\frac{r}{2})}\bigg\vert\sumstar_{a\bmod{p^k}}\jacobi{a}{p}^{kr}e_{p^k}(aN - \overline{4a}\sum_{i=1}^{r}\overline{\beta_i}(d_i+am_i)^2)\bigg\vert
\end{align*}
and the second statement of the lemma follows upon observing that 
\begin{align*}
(\dd+\mathbf{m})^t\diag(\beta_1^{-1},\ldots,\beta_r^{-1},0,\ldots,0)(\dd + \mathbf{m}) &=Q^{*}(\cc+a\bs{l}).
\end{align*}

If $(m_{r+1},\ldots,m_n) \neq \mathbf{0}$ then the condition $am_i + d_i \equiv 0 \bmod{p^k}$ implies that
$v_p(m_i) = v_p(d_i)$ and that $a \equiv - d_i/m_i \bmod{p^k/(p^k,m_i)}$ whenever $m_i \neq 0$. The final statement of the lemma follows and this completes the proof.
\end{proof}

\begin{lemma}\label{lemmaavg1}
Let $\dd = S^t\cc$ and $\mathbf{m} = S^t\bs{l}.$ Suppose that $m_i = d_i = 0$ for every $r+1\leq i \leq n$. Suppose that $4N - Q^*(\bs{l}) = 0$. 
\begin{enumerate}
\item If $Q^{*}(\cc) = 0$, we have
\begin{align*}
\sum_{q \leq x} S_q(\cc) \ll_{\ve} \mathcal{D}^{\frac{1}{2}-\frac{1}{r} + \frac{\kappa}{2r} + \ve} x^{n-\frac{r}{2} + 2 - \frac{\kappa}{2}+\ve}.
\end{align*}
\item If $Q^{*}(\cc) \neq 0$, we have
\begin{align*}
\sum_{q \leq x} S_q(\cc) \ll_{\ve} \mathcal{D}^{\frac{1}{2}- \frac{\kappa}{2r} + \ve} x^{n-\frac{r}{2} + 1 + \frac{\kappa}{2}+\ve}.
\end{align*}
\end{enumerate}
\end{lemma}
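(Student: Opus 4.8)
The plan is to reduce the sum over $q$ to a sum over prime powers via the multiplicativity in Lemma~\ref{multlemma}, and then, on each prime power $p^k$ with $p\nmid 2\mathcal{D}$, invoke the second item of Lemma~\ref{lemmagaussrankdrop} to rewrite $S_{p^k}(\cc)$ in terms of a Kloosterman sum (if $kr$ is even) or a Salié sum (if $kr$ is odd), with modulus $p^k$ and with the hypothesis $4N-Q^*(\bs{l})=0$ forcing the ``additive'' argument of that exponential sum to vanish. Thus after the reduction we are essentially summing, over $q$ coprime to $2\mathcal{D}$, a quantity of size $q^{n-\frac r2}$ times $|S(0,Q^*(\cc);q)|$ or $|T(0,Q^*(\cc);q)|$, and then handling the (bounded-exponent) contribution of primes dividing $2\mathcal{D}$ separately using the crude bound of Lemma~\ref{lemmacs}, which contributes the factor $\mathcal{D}^{1/2+\ve}$ and a harmless power of $x$.

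\medskip

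First I would split $q=q_1 q_2$ where $q_1$ is built from primes dividing $2\mathcal{D}$ and $q_2$ is coprime to $2\mathcal{D}$; by Lemma~\ref{multlemma} the sum factors, the $q_1$-part is estimated trivially by Lemma~\ref{lemmacs} (which already produces the $(q^r,\mathcal{D})^{1/2}$ gain, summing to $\mathcal{D}^{\text{(something)}}$), and the $q_2$-part is the arithmetically interesting one. For the $q_2$-part, write $S_{p^k}(\cc)=p^{k(n-r/2)}\cdot(\text{Kloosterman or Salié sum of modulus }p^k)$ and reassemble across $p$: since $Q^*$ is a fixed quadratic form and $q_2$ is squarefree-free only in the sense that we must track $k$, one gets a main sum of the shape $\sum_{q_2\le x} q_2^{n-r/2}\,|K(0,Q^*(\cc);q_2)|$ up to the parity twist $\kappa$ (which records whether $r$ is even, hence whether odd $k$ produces a genuine Salié sum with its extra $\sqrt{q}$-type behaviour).

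\medskip

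\textbf{Case $Q^*(\cc)=0$.} Here the inner sum degenerates: $S(0,0;q)$ is a Ramanujan-type sum and $T(0,0;q)$ a Gauss-type sum, so one has essentially $|K(0,0;q)|\ll q^{1/2+\ve}$ in the Salié case and $\ll 1$-on-average (Ramanujan sum cancellation) in the Kloosterman case, and the $q$-sum of $q^{n-r/2}\cdot q^{\text{(this)}}$ gives the claimed $x^{n-r/2+2-\kappa/2+\ve}$ with the $\mathcal{D}$-exponent $\frac12-\frac1r+\frac{\kappa}{2r}$ coming from combining the coprime-to-$\mathcal{D}$ main term with the Lemma~\ref{lemmacs} contribution from $q_1$ and optimising the split point.

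\textbf{Case $Q^*(\cc)\neq0$.} Now $K(0,Q^*(\cc);q)$ is a genuine Kloosterman/Salié sum with nonzero second argument, so Weil's bound gives $|S(0,n;q)|\le 2^{\omega(q)}(n,q)^{1/2}q^{1/2}$ and the Salié sum is evaluated in closed form with the same $q^{1/2}$ size. Summing $q^{n-r/2}\cdot q^{1/2+\ve}$ over $q\le x$ yields $x^{n-r/2+3/2+\ve}$ when $\kappa=1$, while for $\kappa=0$ (even $r$, so every $k$ gives a Kloosterman sum, no Salié enhancement) the extra saving on the odd-exponent part improves this to $x^{n-r/2+1+\kappa/2+\ve}=x^{n-r/2+1+\ve}$; again the $\mathcal{D}^{1/2-\kappa/(2r)+\ve}$ factor is assembled from the trivial $q_1$-bound.

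\medskip

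The main obstacle is bookkeeping the parity condition $\kappa$ correctly through the prime-power reassembly: for $p\nmid 2\mathcal{D}$ the sum $S_{p^k}(\cc)$ is a Kloosterman sum when $kr$ is even but a Salié sum when $kr$ is odd, and when $r$ is odd this flips with the parity of $k$, so the product over $p\mid q$ of these local pieces does not factor cleanly as "a single Kloosterman sum of modulus $q$''. One must instead group primes by the parity of their exponent, use the multiplicativity of Kloosterman sums across coprime moduli (and the Salié-to-Gauss-times-Kloosterman splitting for the odd-exponent prime powers), and then feed the resulting mixed product into a hyperbola-type summation over $q$. Getting the $\mathcal{D}$-exponents to match the stated values — in particular the asymmetry $\frac12-\frac1r+\frac{\kappa}{2r}$ versus $\frac12-\frac{\kappa}{2r}$ between the two cases — requires choosing the $q_1/q_2$ split point as a power of $\mathcal{D}$ tuned to the exponent of $x$ in each case, which is the one genuinely delicate optimisation in the argument.
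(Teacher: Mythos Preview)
Your high-level structure matches the paper's: factor $q=q_1q_2$ with $q_2\mid(2\mathcal{D})^\infty$ and $(q_1,2\mathcal{D})=1$, bound the $q_2$-part by Lemma~\ref{lemmacs}, and on the $q_1$-part use Lemma~\ref{lemmagaussrankdrop} together with $4N-Q^*(\bs l)=0$ to reduce to Kloosterman/Sali\'e sums with vanishing first argument. The gap is in your evaluation of these degenerate sums, which is essentially backwards.

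In part~(1) both arguments vanish. You claim $S(0,0;q)$ enjoys ``Ramanujan sum cancellation'' and is $\ll 1$ on average, while $T(0,0;q)\ll q^{1/2}$. In fact $S(0,0;q)=\sumstar_{x\bmod q}1=\phi(q)$ is of size $q$, whereas $T(0,0;p^k)=\sumstar_{x}\bigl(\tfrac{x}{p^k}\bigr)=0$ whenever $k$ is odd. The paper uses exactly this: for $r$ even one gets $|S_{q_1}(\cc)|\le q_1^{1+n-r/2}$ and summation gives $(x/q_2)^{2+n-r/2}$; for $r$ odd the vanishing of $T(0,0;p^k)$ for odd $k$ forces $q_1$ to be a perfect square, and $\sum_{q_1=\square\le X}q_1^{1+n-r/2}\ll X^{3/2+n-r/2}$ is the actual source of the $-\kappa/2$ in the $x$-exponent. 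Your claimed bound would give $x^{n-r/2+1}$ for $r$ even, which is simply false.

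In part~(2) you invoke Weil for $S(0,Q^*(\cc);q)$, but with first argument zero this is a Ramanujan sum: after enlarging the bad set to $D=2\mathcal{D}\,Q^*(\cc)$ so that $(q_1,Q^*(\cc))=1$, one has $S(0,n;p^k)=c_{p^k}(n)=\mu(p^k)$, hence $|S_{q_1}(\cc)|\le q_1^{n-r/2}$ for $r$ even. This, not Weil, is what yields $x^{n-r/2+1}$; your $q^{1/2}$ only gives $x^{n-r/2+3/2}$, and the vague ``extra saving on the odd-exponent part'' does not close that gap. For $r$ odd, $T(0,n;p)$ is a Gauss sum of modulus $\sqrt p$ (and $T(0,n;p^k)=0$ for $k\ge 2$), which supplies the extra $q_1^{1/2}$.

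Finally, there is no split point to tune: the decomposition $q=q_1q_2$ is canonical, and the $\mathcal D$-exponents come from the multiplicative sum $\sum_{q_2\mid(2\mathcal D)^\infty}q_2^{\alpha}(q_2^r,\mathcal D)^{1/2}$, evaluated prime by prime.
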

\begin{proof}
Using Lemmas~\ref{multlemma} and~\ref{lemmacs} we get
$$
\sum_{q \leq x}S_q(\cc) \leq \sum_{\substack{q_2 \leq x \\ q_2 \mid (2\mathcal{D})^{\infty}}}q_2^{1+n-\frac{r}{2}}(q_2^{r},\mathcal{D})^{1/2}
\sum_{\substack{q_1 \leq x/q_2 \\ (q_1,2\mathcal{D})=1}}|S_{q_1}(\cc)|.
$$
By Lemma~\ref{lemmagaussrankdrop}, the inner sum is
\begin{align*}
\sum_{\substack{q_1 \leq x/q_2 \\ (q_1,2\mathcal{D})=1}}|S_{q_1}(\cc)| &\ll 
\begin{cases} 
\sum_{\substack{q_1 \leq x/q_2 \\ (q_1,2\mathcal{D})=1}}q_1^{1+n-\frac{r}{2}} &\mbox{ if $r$ is even} \\
\sum_{\substack{q_1 \leq x/q_2 \\ q_1 = \square \\ (q_1,2\mathcal{D})=1}}q_1^{1+n-\frac{r}{2}} &\mbox{ if $r$ is odd}.
\end{cases} \\
&\ll \left(\frac{x}{q_2}\right)^{2+n-\frac{r}{2}-\frac{\kappa}{2}+\ve}.
\end{align*}
The first part of the lemma follows by summing over $q_1.$

To prove the second statement, we begin by letting $D = 2\mathcal{D} Q^{*}(\cc).$ Then we can write
\begin{align*}
\sum_{q \leq x}S_q(\cc) \leq \sum_{\substack{q_2 \leq x \\ q_2 \mid D^{\infty}}}q_2^{1+n-\frac{r}{2}}(q_2^{r},\mathcal{D})^{1/2}
\sum_{\substack{q_1 \leq x/q_2 \\ (q_1,D)=1}}|S_{q_1}(\cc)|.
\end{align*}
By Lemma~\ref{lemmagaussrankdrop}, we have
$|S_{q_1}(\cc)| = q_1^{n-\frac{r}{2}}$ if $r$ is even and $|S_{q_1}(\cc)| = q_1^{n-\frac{r}{2}+\frac{1}{2}}$ if $r$ is odd. Therefore,
\begin{align*}
\sum_{q \leq x} S_q(\cc) &\ll \sum_{\substack{q_2 \leq x \\ q_2 \mid D^{\infty}}}q_2^{1+n-\frac{r}{2}}(q_2^{r},\mathcal{D})^{1/2}\left(\frac{x}{q_2}\right)^{n-\frac{r}{2}+\frac{\kappa}{2}+1} \\
&\ll x^{n-\frac{r}{2}+\frac{\kappa}{2}+1}\sum_{\substack{q_2 \leq x \\ q_2 \mid D^{\infty}}}q_2^{-\frac{\kappa}{2}}(q_2^{r},\mathcal{D})^{1/2}.
\end{align*}
The second statement of the lemma follows by summing over $q_2$.
\end{proof}

\begin{lemma}\label{lemmaavg2}
Let $\dd = S^t\cc$ and $\mathbf{m} = S^t\bs{l}.$ Suppose that $m_i = d_i = 0$ for every $r+1\leq i \leq n$. Suppose that $4N - Q^*(\bs{l}) \neq 0$. 

\begin{enumerate}
\item If $Q^{*}(\cc) = 0$, we have
\begin{align*}
\sum_{q \leq x} S_q(\cc) \ll_{\ve} \mathcal{D}^{\frac{1}{2}- \frac{\kappa}{2r} + \ve} x^{n-\frac{r}{2} + 1 + \frac{\kappa}{2}+\ve}.
\end{align*}
\item If $Q^{*}(\cc) \neq 0$, we have
\begin{align*}
\sum_{q \leq x} S_q(\cc) \ll_{\ve} \mathcal{D}^{\frac{1}{2}-\frac{1}{2r}+\ve}x^{n-\frac{r}{2}+\frac{3}{2}+\ve}.
\end{align*}
\end{enumerate}

\end{lemma}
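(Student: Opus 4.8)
The plan is to mirror the proof of Lemma~\ref{lemmaavg1}, with the two arguments $m:=4N-Q^*(\bs{l})$ and $Q^*(\cc)$ of the exponential sums playing the opposite roles relative to $0$, and --- in part (2) --- with honest two-variable Kloosterman and Sali\'{e} sums replacing the Ramanujan-type sums that sufficed there. First I would fix a bad modulus $D$ divisible by $2\mathcal{D}$ and by the numerator of $m$ (and, in part (2), also by the numerator of $Q^*(\cc)$), write each $q\le x$ as $q=q_1q_2$ with $q_2\mid D^{\infty}$ and $(q_1,D)=1$, and split via Lemma~\ref{multlemma}. The $q_2$-part is handled by Lemma~\ref{lemmacs}, giving $|S_{q_2}(\cc)|\le(q_2^{r},\mathcal{D})^{1/2}q_2^{1+n-r/2}$, exactly as there. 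Since $p\nmid 2\mathcal{D}$ for every $p\mid q_1$, and $d_i=m_i=0$ for $i>r$ by hypothesis, Lemma~\ref{lemmagaussrankdrop}(2) applies to each prime-power component of $q_1$ and, with Lemma~\ref{multlemma} again, yields $|S_{q_1}(\cc)|=q_1^{n-r/2}|K(q_1)|$, where $K(q_1)$ is a product over the prime powers $p^{k}\| q_1$ of the Kloosterman sum $S(m,Q^*(\cc);p^{k})$ when $2\mid kr$ and of the Sali\'{e} sum $T(m,Q^*(\cc);p^{k})$ when $2\nmid kr$, all of whose arguments are units modulo $p$.

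In part (1), where $Q^*(\cc)=0$, these degenerate to Ramanujan sums $S(m,0;p^{k})=c_{p^{k}}(m)$ and their Sali\'{e} analogues: since $m$ is a unit mod $p$ one has $c_{p^{k}}(m)=\mu(p^{k})$, while $T(m,0;p^{k})$ vanishes for $k\ge 2$ and $|T(m,0;p)|=\sqrt{p}$. Hence $|S_{q_1}(\cc)|\le q_1^{n-r/2}$ when $r$ is even, whereas for $r$ odd the sum $S_{q_1}(\cc)$ is supported on squarefree $q_1$ with $|S_{q_1}(\cc)|=q_1^{n-r/2+1/2}$ --- in both cases precisely the $q_1$-bound already used in part (2) of Lemma~\ref{lemmaavg1}. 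Summing over $q_1\le x/q_2$ produces an $x$-power $x^{n-\frac{r}{2}+1+\frac{\kappa}{2}}$ and leaves a residual sum $\sum_{q_2\mid D^{\infty}}q_2^{-\kappa/2}(q_2^{r},\mathcal{D})^{1/2}$; optimising $q_2^{-\kappa/2}(q_2^{r},\mathcal{D})^{1/2}$ --- the extremal $q_2$ having size about $\mathcal{D}^{1/r}$, so that $q_2^{r}$ just captures $\mathcal{D}$ --- and making this rigorous by an Euler product / Rankin's trick bounds it by $\mathcal{D}^{\frac{1}{2}-\frac{\kappa}{2r}+\ve}$, which gives the first assertion.

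In part (2), $m$ and $Q^*(\cc)$ are both nonzero and, by the choice of $D$, units modulo every $p\mid q_1$, so $K(q_1)$ is a product of genuine Kloosterman sums --- bounded via Weil by $\tau(p^{k})(m,Q^*(\cc),p^{k})^{1/2}p^{k/2}\ll p^{k/2+\ve}$ --- and genuine Sali\'{e} sums, bounded by $\ll p^{k/2}$ through their classical evaluation. Thus $|S_{q_1}(\cc)|\ll q_1^{n-\frac{r}{2}+\frac{1}{2}+\ve}$ for \emph{both} parities of $r$, so $\sum_{q_1\le x/q_2}|S_{q_1}(\cc)|\ll(x/q_2)^{n-\frac{r}{2}+\frac{3}{2}+\ve}$; combined with $\sum_{q_2\mid D^{\infty}}q_2^{-1/2}(q_2^{r},\mathcal{D})^{1/2}\ll_{\ve}\mathcal{D}^{\frac{1}{2}-\frac{1}{2r}+\ve}$ this yields the claimed $\mathcal{D}^{\frac{1}{2}-\frac{1}{2r}+\ve}x^{n-\frac{r}{2}+\frac{3}{2}+\ve}$ in both cases.

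The step I expect to be the main obstacle is part (2): unlike in Lemma~\ref{lemmaavg1}, where one argument of every exponential sum vanished, here one must extract genuine square-root cancellation uniformly --- via Weil's bound for the Kloosterman prime powers and the classical evaluation of Sali\'{e} sums for the odd ones --- while keeping careful track of the $(m,Q^*(\cc),q_1)$ and $(q_2^{r},\mathcal{D})$ gcd contributions and of the $\kappa$-bookkeeping as $r$ and $k$ each run over both parities, so that the final exponents match the statement. A secondary point is that the auxiliary factors generated en route (powers of $\mathcal{D}$, $\|F\|$, $P$ and $|\cc|$ coming from $D^{\ve}$, from the count of $D^{\infty}$-smooth $q_2\le x$, and from divisor sums) are all $\ll_{\ve}(\mathcal{D}\|F\|P|\cc|)^{O(\ve)}$, hence harmless once these lemmas are fed back into the circle method --- the $|\cc|$-dependence being absorbed by the rapid decay of $I_q(\cc)$ supplied by Lemma~\ref{lemmahard}.
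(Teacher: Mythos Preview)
Your proposal is correct and follows essentially the same route as the paper: choose $D=2\mathcal{D}(4N-Q^*(\bs{l}))$ for part (1) (respectively $D=2\mathcal{D}(4N-Q^*(\bs{l}))Q^*(\cc)$ for part (2)), factor $q=q_1q_2$ with $q_2\mid D^{\infty}$ and $(q_1,D)=1$, bound the $q_2$-piece by Lemma~\ref{lemmacs}, and handle the $q_1$-piece via Lemma~\ref{lemmagaussrankdrop}(2) together with the Weil bound. The paper is terser---for part (1) it simply refers back to the proof of the second statement of Lemma~\ref{lemmaavg1}, and for part (2) it invokes ``the Weil bound for exponential sums'' in one line---whereas you spell out the Ramanujan/Gauss-sum degeneration and the Kloosterman/Sali\'e dichotomy explicitly; but the underlying argument is the same.
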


\begin{proof}
The proof of the first statement is similar to the proof of the second statement of Lemma~\ref{lemmaavg1}. The only difference is that we must take $D = 2\mathcal{D}(4N-Q^*(\bs{l}))$.

For the second, let $D = 2\mathcal{D}(4N-Q^*(\bs{l}))Q^*(\cc)$. Then by using Lemmas~\ref{multlemma} and~\ref{lemmacs}, we get
$$
\sum_{q \leq x} S_q(\cc) \ll \sum_{\substack{q_2 \leq x \\ q_2 \mid D^{\infty}}}q_2^{1+n-\frac{r}{2}}(q_2^{r},\mathcal{D})^{1/2}
\sum_{\substack{q_1 \leq x/q_2 \\ (q_1,D)=1}}|S_{q_1}(\cc)|.
$$
By the Weil bound for exponential sums, we get $|S_{q_1}(\cc)| \ll q_1^{n-\frac{r}{2}+\frac{1}{2}+\ve}$. As a result, we have
\begin{align*}
\sum_{q \leq x} S_q(\cc) &\ll x^{n-\frac{r}{2}+\frac{3}{2}+\ve}\sum_{\substack{q_2 \leq x \\ q_2 \mid D^{\infty}}}q_2^{-\frac{1}{2}}(q_2^{r},\mathcal{D})^{1/2} \\
&\ll \mathcal{D}^{\frac{1}{2}-\frac{1}{2r}+\ve}x^{n-\frac{r}{2}+\frac{3}{2}+\ve}.
\end{align*}
This completes the proof of the lemma.
\end{proof}

\begin{lemma}\label{lemmaavg3}
Let $\dd = S^t\cc$ and $\mathbf{m} = S^t\bs{l}.$ Suppose that $m_i = 0$ for every $r+1 \leq i \leq n$. If $d_i \neq 0$ for some $r+1 \leq i \leq n$. If $|\cc| \leq \|F\|^{A}$, for some $A > 0$, then we have
\begin{align*}
\sum_{q \leq x}S_q(\cc) \ll_{\ve} \mathcal{D}^{\frac{1}{2}+\ve}x^{1+n-\frac{r}{2}+\ve}.
\end{align*}
\end{lemma}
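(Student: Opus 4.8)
The plan is to exploit the vanishing criterion from part~(3) of Lemma~\ref{lemmagaussrankdrop} — or rather its analogue when some $d_i$ is nonzero but $m_i = 0$ — to show that $S_{p^k}(\cc)$ vanishes for all but finitely many $k$ at each bad prime, and then to combine this with the Cauchy–Schwarz bound of Lemma~\ref{lemmacs} via the multiplicativity in Lemma~\ref{multlemma}. First I would revisit the computation in the proof of Lemma~\ref{lemmagaussrankdrop}: when $p \nmid 2\mathcal{D}$, the factor $S_2(a) = \prod_{i=r+1}^n \sum_{b_i} e_{p^k}(am_i b_i + b_i d_i)$ with $m_i = 0$ becomes $\prod_{i=r+1}^n \sum_{b_i} e_{p^k}(b_i d_i)$, which vanishes unless $p^k \mid d_i$ for every $r < i \le n$. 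Since $d_i \neq 0$ for some such $i$, this forces $S_{p^k}(\cc) = 0$ whenever $p^k > |d_i|$, i.e. whenever $p^k > |\dd| \ll |\cc| \ll \|F\|^A$ (using that $S$ has bounded entries so $|\dd| = |S^t\cc| \ll |\cc|$, and absorbing the implied constant). Thus for primes $p \nmid 2\mathcal{D}$ the only surviving modulus $q_1$ with $(q_1, 2\mathcal D) = 1$ contributing to $\sum_{q \le x} S_q(\cc)$ are those $q_1$ all of whose prime-power factors $p^{k}$ satisfy $p^{k} \le \|F\|^{A}$; but more simply, by the coprime factorisation $q = q_1 q_2$ with $q_2 \mid (2\mathcal D)^\infty$ and $(q_1, 2\mathcal D) = 1$, the component $q_1$ itself must divide the fixed integer $\prod_{r < i \le n} d_i$ (up to sign), so $q_1$ ranges over a \emph{bounded} set once we also impose $q_1 \mid (\prod d_i)$.

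More precisely I would argue: write $q = q_1 q_2$ as above. By Lemma~\ref{lemmagaussrankdrop}(1), $S_{q_1}(\cc)$ vanishes unless $v_p(d_i) = v_p(m_i) = 0$ for all $p \mid q_1$ and all $r < i \le n$ — wait, that is not quite what is wanted since $v_p(d_i)$ need not be $0$. Let me instead use the direct vanishing just derived: for $p \nmid 2\mathcal D$, $S_{p^k}(\cc) = 0$ unless $p^k \mid d_i$ for all $r < i \le n$, hence unless $p^k \mid g$, where $g = \gcd_{r < i \le n, \, d_i \ne 0}(d_i) \ll \|F\|^A$. Therefore by multiplicativity (Lemma~\ref{multlemma}), $S_{q_1}(\cc) = 0$ unless $q_1 \mid g^\infty$ in the strong sense that each prime power exactly dividing $q_1$ divides $g$; in particular $q_1 \le \|F\|^{A}$ always, and moreover $q_1$ lies in the finite set of divisors-by-prime-powers of $g$. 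Combining with Lemma~\ref{lemmacs} for the $q_2$-part (which only uses $S_{q_2}(\cdot) \le (q_2^r, \mathcal D)^{1/2} q_2^{1+n-r/2}$) and the trivial bound $|S_{q_1}(\cc)| \le q_1^{1+n} \ll q_1^{1 + n - r/2} \cdot q_1^{r/2}$ — actually here I should just use Lemma~\ref{lemmacs} again for $q_1$, giving $|S_{q_1}(\cc)| \le (q_1^r,\mathcal D)^{1/2} q_1^{1 + n - r/2} = q_1^{1 + n - r/2}$ since $(q_1, \mathcal D) = 1$ — we obtain
\begin{align*}
\sum_{q \le x} S_q(\cc) &\le \sum_{\substack{q_2 \le x \\ q_2 \mid (2\mathcal D)^\infty}} q_2^{1 + n - r/2} (q_2^r, \mathcal D)^{1/2} \sum_{\substack{q_1 \le x/q_2 \\ (q_1, 2\mathcal D) = 1 \\ S_{q_1}(\cc) \ne 0}} q_1^{1 + n - r/2}.
\end{align*}

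The inner sum is now over $q_1$ in a bounded set (each such $q_1$ divides $g^{(\infty)} \ll \|F\|^{A \cdot (n-r)}$ say, in particular $q_1 \ll_{\ve} \|F\|^{\ve}$ is \emph{false} in general — so I should keep $q_1 \le \min(x/q_2, \|F\|^{A})$ and bound $\sum q_1^{1+n-r/2} \ll \min(x, \|F\|^A)^{2 + n - r/2 + \ve}$, but we actually want a clean power of $x$). Here is the cleaner route: simply bound $\sum_{q_1 \le x/q_2} q_1^{1+n-r/2} \ll (x/q_2)^{2 + n - r/2 + \ve}$ trivially, which combined with $\sum_{q_2 \mid (2\mathcal D)^\infty} q_2^{-1 + \ve}(q_2^r,\mathcal D)^{1/2} \ll_\ve \mathcal D^{1/2 + \ve}$ gives $\sum_{q \le x} S_q(\cc) \ll_\ve \mathcal D^{1/2 + \ve} x^{2 + n - r/2 + \ve}$ — but the claimed exponent is $1 + n - r/2$, one less. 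So the vanishing \emph{must} be used to truncate $q_1$: since $S_{q_1}(\cc) \ne 0$ forces $q_1 \mid g^{(\infty)}$ and in particular $q_1 \le g \ll \|F\|^A$ wait no, $q_1$ squarefree-radical divides... Let me just say: for each prime $p \nmid 2\mathcal D$, $S_{p^k}(\cc) \ne 0$ requires $p^k \le |d_{i_0}| \ll \|F\|^A$ for the index $i_0$ with $d_{i_0} \ne 0$, so $q_1$'s contribution telescopes — the divisor sum $\sum_{q_1 \mid g} q_1^{1 + n - r/2} \ll_\ve g^{1 + n - r/2 + \ve}$, which is a \emph{constant} in $x$. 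Feeding this back, $\sum_{q \le x} S_q(\cc) \ll_\ve \mathcal D^{1/2+\ve}\, \|F\|^{\ve}\, x^{\ve} \cdot \max_{q_2} q_2^{1+n-r/2}(q_2^r,\mathcal D)^{1/2}/\cdots$ — and here using $q_2 \le x$ with exponent $1 + n - r/2$ and $\sum (q_2^r,\mathcal D)^{1/2} \ll \mathcal D^{1/2+\ve}$ over $q_2 \mid (2\mathcal D)^\infty$ yields exactly $\mathcal D^{1/2+\ve} x^{1 + n - r/2 + \ve}$, as claimed.

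\textbf{Main obstacle.} The delicate point is bookkeeping the truncation: one must be careful that it is the \emph{$(q_1,2\mathcal D)=1$ part} that is forced to lie in a finite set (of size depending on $\cc$, hence on $\|F\|^A$, hence contributing only $\|F\|^\ve x^\ve$ after the $\ve$-absorption), while the $q_2 \mid (2\mathcal D)^\infty$ part is where the genuine power $x^{1+n-r/2}$ and the $\mathcal D^{1/2+\ve}$ come from via Lemma~\ref{lemmacs}. A secondary subtlety is that the hypothesis $|\cc| \le \|F\|^A$ is exactly what makes the finite set of admissible $q_1$ have size $\ll_\ve \|F\|^\ve x^\ve$ rather than growing with $x$; without it the exponent of $x$ would go up by one, matching the bounds in Lemmas~\ref{lemmaavg1}–\ref{lemmaavg2}. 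I would also need to double-check that the argument for the vanishing of $S_2(a)$ when $m_i = 0$ is insensitive to whether $p \mid d_i$ or not — it is, since $\sum_{b_i \bmod p^k} e_{p^k}(b_i d_i)$ is $p^k$ if $p^k \mid d_i$ and $0$ otherwise, regardless of the $p$-adic valuation being $0$.
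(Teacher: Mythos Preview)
Your strategy is exactly the paper's: factor $q=q_1q_2$ with $q_2\mid(2\mathcal D)^\infty$ and $(q_1,2\mathcal D)=1$, use Lemma~\ref{lemmacs} on the $q_2$-part, and kill the $q_1$-part via the vanishing from Lemma~\ref{lemmagaussrankdrop}, which forces $q_1\mid d_j$ for some fixed $j$ with $d_j\neq 0$. The identification of the vanishing criterion is correct.

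However, your bookkeeping goes wrong at the key step. You write
\[
\sum_{q_1\mid g} q_1^{\,1+n-r/2}\ \ll_\ve\ g^{\,1+n-r/2+\ve}
\]
and then assert this contributes only $\|F\|^\ve x^\ve$. But $g\ll\|F\|^{A}$, so this is a factor $\|F\|^{A(1+n-r/2)}$, which cannot be absorbed into $\mathcal D^{\ve}$ or $x^{\ve}$. Your ``Main obstacle'' paragraph correctly says the admissible $q_1$ lie in a finite \emph{set}, but it is only the \emph{cardinality} of that set that is $\ll\|F\|^\ve$; the elements themselves can be as large as $\|F\|^A$, and you are summing a positive power of them. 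Allocating the $x^{1+n-r/2}$ to the $q_2$-sum does not compensate for this.

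The fix is a one-line swap and is what the paper does: instead of using $q_1\le g$ to bound each summand, use $q_1\le x/q_2$, obtaining
\[
\sum_{\substack{q_1\le x/q_2\\(q_1,2\mathcal D)=1\\ q_1\mid d_j}} q_1^{\,1+n-r/2}
\ \ll\ \tau(d_j)\,(x/q_2)^{\,1+n-r/2}
\ \ll_\ve\ |d_j|^{\ve}\,(x/q_2)^{\,1+n-r/2}.
\]
Then the $q_2$-sum telescopes to $\sum_{q_2\mid(2\mathcal D)^\infty,\,q_2\le x}(q_2^r,\mathcal D)^{1/2}\ll_\ve \mathcal D^{1/2+\ve}x^\ve$, and $|d_j|^\ve\ll\|F\|^{A\ve}$ is genuinely absorbable after rescaling $\ve$. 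This yields $\mathcal D^{1/2+\ve}x^{1+n-r/2+\ve}$ as claimed.
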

\begin{proof}
Invoking Lemmas~\ref{multlemma} and~\ref{lemmacs} we get
\begin{align*}
\sum_{q \leq x}S_q(\cc) \leq \sum_{\substack{q_2 \leq x \\ q_2 \mid (2\mathcal{D})^{\infty}}}q_2^{1+n-\frac{r}{2}}(q_2^{r},\mathcal{D})^{\frac{1}{2}}\sum_{\substack{q_1 \leq x/q_2 \\ (q_1,2\mathcal{D}) = 1}}|S_{q_1}(\cc)|.
\end{align*}
Suppose that $d_j \neq 0$ where $r+1 \leq j \leq n$. Then by Lemma~\ref{lemmagaussrankdrop} we get that $S_{q_1}(\cc)$ vanishes unless $q_1 \mid d_j$. Therefore we find that
$$
\sum_{\substack{q_1 \leq x/q_2 \\ (q_1,2\mathcal{D}) = 1}}|S_{q_1}(\cc)| =  \sum_{\substack{q_1 \leq x/q_2 \\ (q_1,2\mathcal{D}) = 1 \\ q_1 \mid d_j}}|S_{q_1}(\cc)| \ll (|d_j|)^{\ve}\left(\frac{x}{q_2}\right)^{n-\frac{r}{2}+1}.
$$
The lemma follows by summing over $q_2$.
\end{proof}

\begin{lemma}\label{lemmaavg4}
Let $\dd = S^t\cc$ and $\mathbf{m} = S^t\bs{l}.$ Suppose that $m_i \neq 0$ for some $r+1 \leq i \leq n$. Then we have
\begin{align*}
\sum_{q \leq x}S_q(\cc) \ll \mathcal{D}^{\frac{1}{2}+\ve}x^{1+n-\frac{r}{2}+\ve}.
\end{align*}
\end{lemma}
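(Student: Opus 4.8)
The plan is to follow the proof of Lemma~\ref{lemmaavg3}, using part~(3) of Lemma~\ref{lemmagaussrankdrop} in place of the divisibility constraint coming from part~(2). Fix an index $j_0$ with $r+1\le j_0\le n$ and $m_{j_0}\neq 0$, and set $D=2\mathcal{D}m_{j_0}$; this is a nonzero integer depending only on $F$, and under the standing hypotheses one has $|D|\ll\mathcal{D}\|F\|^{O(1)}$ (the denominators occurring in $S$ are controlled by $\mathcal{D}\ll\|F\|^r$, and $|\bs{l}|\ll\|F\|^2$). Every $q\le x$ factors uniquely as $q=q_1q_2$ with $q_2\mid D^\infty$ and $(q_1,D)=1$, and Lemma~\ref{multlemma} gives
$$
|S_q(\cc)|=|S_{q_1}(\overline{q_2}\cc)|\,|S_{q_2}(\overline{q_1}\cc)|.
$$
Note that replacing $\cc$ by a unit multiple leaves $\mathbf{m}=S^t\bs{l}$ unchanged and only scales $\dd=S^t\cc$ by a unit, so it does not affect the applicability of Lemma~\ref{lemmagaussrankdrop}.

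First I would bound the sum over $q_1$. Iterating Lemma~\ref{multlemma}, $S_{q_1}(\overline{q_2}\cc)$ is a product of factors $S_{p^k}(\,\cdot\,)$, one for each prime power $p^k\| q_1$, evaluated at unit multiples of $\cc$. For each such $p$ we have $p\nmid 2\mathcal{D}$ and $p\nmid m_{j_0}$, so part~(3) of Lemma~\ref{lemmagaussrankdrop}, applied with $J=\{j_0\}$ (admissible since $m_{j_0}\neq 0$ and $v_p(m_{j_0})=0$), gives $|S_{p^k}(\,\cdot\,)|\le p^{k(n-\frac{r}{2})}$. Multiplying, $|S_{q_1}(\overline{q_2}\cc)|\le q_1^{\,n-\frac{r}{2}}$, whence $\sum_{q_1\le y}|S_{q_1}(\overline{q_2}\cc)|\ll y^{\,n-\frac{r}{2}+1}$ for all $y\ge 1$. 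This clean exponent (with no loss of a full power of $q_1$) is exactly what produces the sharp final bound.

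For the sum over $q_2$ I would use Cauchy--Schwarz in the form of Lemma~\ref{lemmacs}, namely $|S_{q_2}(\overline{q_1}\cc)|\le(q_2^r,\mathcal{D})^{1/2}q_2^{\,1+n-\frac{r}{2}}\le\mathcal{D}^{1/2}q_2^{\,1+n-\frac{r}{2}}$. Combining the two bounds,
\begin{align*}
\sum_{q\le x}|S_q(\cc)|
&\le\sum_{\substack{q_2\le x\\ q_2\mid D^\infty}}|S_{q_2}(\overline{q_1}\cc)|\sum_{q_1\le x/q_2}|S_{q_1}(\overline{q_2}\cc)|
\ll\mathcal{D}^{1/2}\sum_{\substack{q_2\le x\\ q_2\mid D^\infty}}q_2^{\,1+n-\frac{r}{2}}\Big(\frac{x}{q_2}\Big)^{n-\frac{r}{2}+1}\\
&=\mathcal{D}^{1/2}x^{\,1+n-\frac{r}{2}}\,\#\{q_2\le x:q_2\mid D^\infty\}.
\end{align*}
The number of $D^\infty$-smooth integers up to $x$ is $O_\ve\big((x|D|)^\ve\big)$, and with $|D|\ll\mathcal{D}\|F\|^{O(1)}$ this yields $\sum_{q\le x}|S_q(\cc)|\ll_\ve\mathcal{D}^{1/2+\ve}\|F\|^\ve x^{\,1+n-\frac{r}{2}+\ve}$, which is the asserted estimate; the extra factor $\|F\|^\ve$ is harmless and, since $\|F\|$ is bounded by a fixed power of $P$ in the intended application, may be absorbed into $x^\ve$.

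I do not anticipate a genuine difficulty here: once part~(3) of Lemma~\ref{lemmagaussrankdrop} is available, the argument is a routine combination of multiplicativity, a pointwise bound on the part of $q$ coprime to $2\mathcal{D}m_{j_0}$, and Cauchy--Schwarz on the remaining (sparse) part, in the spirit of Lemmas~\ref{lemmaavg1}--\ref{lemmaavg3}. The two points to check are that the unit twists produced by Lemma~\ref{multlemma} do not disturb the hypotheses of Lemma~\ref{lemmagaussrankdrop} (they do not, as that estimate only sees $\mathbf{m}=S^t\bs{l}$ and the condition $p\nmid 2\mathcal{D}m_{j_0}$), and that enlarging the modulus from $2\mathcal{D}$ to $2\mathcal{D}m_{j_0}$ costs only the $x^\ve$ smooth-number factor recorded above.
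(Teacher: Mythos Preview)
Your argument is correct and mirrors the paper's proof: factor $q=q_1q_2$ with $q_2\mid(2\mathcal{D}m_{j_0})^\infty$, apply Lemma~\ref{lemmacs} to $S_{q_2}$ and the pointwise bound from part~(3) of Lemma~\ref{lemmagaussrankdrop} to each prime-power factor of $S_{q_1}$, then sum. You are somewhat more explicit than the paper about the unit twists from multiplicativity and about the extra $\|F\|^{\ve}$ arising from $m_{j_0}$ in the smooth-number count, but the underlying strategy is identical.
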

\begin{proof}
Suppose that $m_j \neq 0$ for some $r+1 \leq j \leq n$. Then we once again have
\begin{align*}
\sum_{q \leq x}S_q(\cc) \leq \sum_{\substack{q_2 \leq x \\ q_2 \mid (2\mathcal{D} m_j)^{\infty}}}q_2^{1+n-\frac{r}{2}}(q_2^{r},\mathcal{D})^{\frac{1}{2}}\sum_{\substack{q_1 \leq x/q_2 \\ (q_1,2\mathcal{D} m_j) = 1}}|S_{q_1}(\cc)|.
\end{align*}
By Lemma~\ref{lemmagaussrankdrop}, the inner sum is bounded by $(x/q_2)^{1+n-\frac{r}{2}}.$ The lemma follows from summing over $q_2$.
\end{proof}

\subsection{Proof of Theorem~\ref{propbh19}}

Recall that $S \in \sln(\QQ)$ such that $S^tMS = \diag (\beta_1,\ldots,\beta_{r},0,\ldots,0)$. Let $\mathcal{D} = \prod_{i=1}^{r}\beta_i$. 
Let $\Delta$ denote the absolute value of the product
of the non-zero eigenvalues $\lambda_i$ of $M$. Then $\mathcal{D} \ll \Delta$. 
Let $\mathbf{m} = S^t\bs{l}.$ Let $\ve > 0$ and $C = \|F\|^{\frac{1}{2}}P^{\ve}.$ 
By~\eqref{eq:npw} and Lemma~\ref{lemmaparts} we have 
\begin{equation*}
N(P,w) = M(P) + E(P) + E'(P) + O(1),
\end{equation*} where 
$$M(P) = \frac{1}{P^2}\sum_{q=1}^{\infty}q^{-n}S_q(\mathbf{0})I_q(\mathbf{0}),\, \, E(P) = \frac{1}{P^2}\sum_{\substack{\cc \in \ZZ^n \\ 1 \leq |\cc| \ll C}}\sum_{q=1}^{\infty}q^{-n}S_q(\mathbf{c})I_q(\mathbf{c})$$
and
$$
E'(P) = \frac{1}{P^2}\sum_{\substack{\cc \in \ZZ^n \\ |\cc| \gg C}}\sum_{q=1}^{\infty}q^{-n}S_q(\mathbf{c})I_q(\mathbf{c}).
$$
We begin by estimating $E'(P)$. Let $N > 0$ be any integer. We have by~\eqref{lemmaparts} and the trivial estimate $|S_q(\cc)|\leq q^n$.
\begin{align*}
E'(P) &\ll \|F\|^{-\frac{n}{2}}P^{n-1}\sum_{q \ll P}\frac{1}{q}
\sum_{\substack{\cc \in \ZZ^n \\ |\cc| \gg C}}\frac{\|F\|^{\frac{N+1}{2}}}{|\cc|^{N}} \\
&\ll C^{n-N}\|F\|^{\frac{1}{2}-\frac{n-N}{2}}P^{n-1+\ve} \ll 1
\end{align*}
by taking $N$ large enough. 

We will now estimate $M(P)$ and $E(P)$. There are two natural cases to consider based on whether the tuple $(m_{r+1},\ldots,m_n) = \mathbf{0}$ or not. We begin by discussing the former case.
\subsubsection{Case 1: $(m_{r+1},\ldots,m_n) = \mathbf{0}$} We begin by analysing the main term $M(P)$. Let $J < r-1/2$ be a positive integer. 
By Lemma~\ref{iq0} we get that
\begin{equation*}
\begin{split}
M(P) &= \sigma_{\infty}(Q_{\sgn},w_1)P^{n-2}\|F\|^{-(n-r)/2}\Delta^{-1/2}\sum_{q \ll P}\frac{S_q(\mathbf{0})}{q^n} \\
&\quad  + O\left(\|F\|^{-(n-r)/2}\Delta^{-1/2}P^{n-2-\eta/2}\sum_{q \ll P}\frac{|S_q(\mathbf{0})|}{q^{n}}\right) \\
&\quad \quad + O\left(\|F\|^{-(n-r)/2}\Delta^{-1/2}P^{n-2-J}\sum_{q \ll P}\frac{|S_q(\mathbf{0})|}{q^{n-J}}\right) \\
&= M_1 + E_1+E_2,
\end{split}
\end{equation*}
say. Observe that the estimates in Lemmas~\ref{lemmaavg1} and~\ref{lemmaavg2} ensure that the sum $\sum_{q=1}^{\infty}q^{-n}S_q(\mathbf{0})$ 
converges absolutely if $r \geq 5$. Moreover, we have the estimate 
\begin{equation}\label{eq:absconvsingseries}
\sum_{q \leq x}\frac{S_q(\mathbf{0})}{q^n} = \sum_{q = 1}^{\infty}\frac{S_q(\mathbf{0})}{q^n} + O_{\ve}(\mathcal{D}^{1/2}x^{2-r/2+\ve}).
\end{equation}
Recall the constant $\mathfrak{S}(F)$ from~\eqref{eq:singseriesf}. It 
is well-known that 
$$
\sum_{q = 1}^{\infty}\frac{S_q(\mathbf{0})}{q^n} = \mathfrak{S}(F).
$$
Therefore, we have 
\begin{align*}
M_1 = \|F\|^{-(n-r)/2}\Delta^{-1/2}\mathfrak{S}(F)\sigma_{\infty}(Q_{\sgn},w_1)P^{n-2} + O(P^{n-r/2+\ve}).
\end{align*}
Next, using~\eqref{eq:absconvsingseries} once again, we get that $E_1 \ll P^{n-r/2-\eta/2}$. Taking $J = \frac{r+\kappa}{2}-2$, we have by
Lemmas~\ref{lemmaavg1} and~\ref{lemmaavg2} that
\begin{equation}\label{eq:mpe0}
\begin{split}
\sum_{q \ll P}\frac{|S_q(\mathbf{0})|}{q^{n-J}} &\ll \sum_{\substack{x \ll P \\ x \text{ dyadic}}}x^{-n+\frac{r}{2}+\frac{\kappa}{2}-2}\sum_{q \ll x}|S_q(\mathbf{0})| \\
&\ll \|F\|^{\ve}P^{\ve}\mathcal{D}^{\frac{1}{2}-\frac{\kappa}{2r}}.
\end{split}
\end{equation}
As a result, we get $E_2 \ll P^{n-r/2+\ve}$.
Thus we have shown that
\begin{equation}\label{eq:mp}
\begin{split}
M(P) &= \|F\|^{-(n-r)/2}\Delta^{-1/2}\mathfrak{S}(F)\sigma_{\infty}(Q_{\sgn},w_1)P^{n-2} + O_{\ve}(P^{n-\frac{r}{2}+\ve}).\end{split}
\end{equation}

Moving on to estimating $E(P)$, we get from Lemma~\ref{lemmahard} that 
\begin{align*}
E(P) &\ll  \Delta^{-\frac{1}{2}}\|F\|^{-\frac{n}{2}+\frac{3r}{4}-\frac{1}{2}}P^{n-\frac{r}{2}-1}\times \\
&\quad\sum_{\substack{x \ll Q \\ x \text{ dyadic}}}x^{-n+\frac{r}{2}-1}\sum_{\substack{\cc \in \ZZ^n \\ 0 < |\cc| \ll C}}|\cc|^{1-\frac{r}{2}}\sum_{x\leq q \leq 2x}|S_q(\cc)|.
\end{align*}
Set $\dd = S^t\cc$. Let $E_1(P)$ denote the contribution to $E(P)$ from non-zero vectors $\cc$ such that $(d_{r+1},\ldots,d_{n}) = \mathbf{0}$ and let $E_2(P)$ denote the contribution from non-zero vectors $\cc$ for which there exists $r+1 \leq j \leq n$ such that $d_j \neq 0$. 

We begin by analysing the sum $E_1(P)$. We further subdivide $E_1(P)$ as follows. Put
\begin{align*}
E_{1,1}(P) &= \Delta^{-\frac{1}{2}}\|F\|^{-\frac{n}{2}+\frac{3r}{4}-\frac{1}{2}}P^{n-\frac{r}{2}-1}\sum_{\substack{x \ll Q \\ x \text{ dyadic}}}x^{-n+\frac{r}{2}-1}\times \\
&\quad \sum_{\substack{0 < |\cc| \ll C \\ (d_{r+1},\ldots,d_n) = \mathbf{0} \\ Q^*(\cc) \neq 0}}|\cc|^{1-\frac{r}{2}}\sum_{x\leq q \leq 2x}|S_q(\cc)|
\end{align*}
and
\begin{align*}
E_{1,2}(P) &= \Delta^{-\frac{1}{2}}\|F\|^{-\frac{n}{2}+\frac{3r}{4}-\frac{1}{2}}P^{n-\frac{r}{2}-1}\sum_{\substack{x \ll Q \\ x \text{ dyadic}}}x^{-n+\frac{r}{2}-1}\times \\
&\quad\sum_{\substack{0 < |\cc| \ll C \\ (d_{r+1},\ldots,d_n) = \mathbf{0} \\ Q^*(\cc) = 0}}|\cc|^{1-\frac{r}{2}}\sum_{x\leq q \leq 2x}|S_q(\cc)|.
\end{align*}
Then $E_1(P) \leq E_{1,1}(P) + E_{1,2}(P)$.
By Lemmas~\ref{lemmaavg1} and~\ref{lemmaavg2} we get
\begin{align*}
E_{1,1}(P) &\ll_{\ve}
\|F\|^{-\frac{n}{2}+\frac{3r}{4}-\frac{1}{2}+\ve}P^{n-\frac{r}{2}-1+\ve}\times \\
&\quad \left(\Delta^{-\frac{\kappa}{2r}}P^{\frac{\kappa}{2}} + \Delta^{-\frac{1}{2r}}P^{\frac{1}{2}}\right)
\sum_{\substack{0 < |\cc| \ll C \\ (d_{r+1},\ldots,d_n) = \mathbf{0} \\ Q^*(\cc) \neq 0}}|\cc|^{1-\frac{r}{2}}.  \\
\end{align*}
As $\|F\| \leq P$, we see that
\begin{align*}
E_{1,1}(P) &\ll_{\ve} 
\Delta^{-\frac{1}{2r}}\|F\|^{-\frac{n}{2}+\frac{3r}{4}-\frac{1}{2}}P^{n-\frac{r}{2}-\frac{1}{2}}(\|F\|P)^{\ve}
\sum_{\substack{0 < |\cc| \ll C \\ (d_{r+1},\ldots,d_n) = \mathbf{0}}}|\cc|^{1-\frac{r}{2}},
\end{align*}
where we have omitted the condition $Q^*(\cc) \neq 0$ in the last expression. As a result, we get
\begin{equation}\label{eq:e11p}
E_{1,1}(P) \ll_{\ve} \Delta^{-\frac{1}{2r}}\|F\|^{\frac{r}{2}+\ve}P^{n-\frac{r}{2}-\frac{1}{2}+\ve}.
\end{equation}

Similarly, using Lemmas~\ref{lemmaavg1} and~\ref{lemmaavg2} and the fact that $\|F\| \leq P$, we get that
\begin{align*}
E_{1,2}(P) &\ll_{\ve} 
\Delta^{-\frac{1}{r}+\frac{\kappa}{2r}}\|F\|^{-\frac{n}{2}+\frac{3r}{4}-\frac{1}{2}+\ve}P^{n-\frac{r}{2}-\frac{\kappa}{2}+\ve}
\sum_{\substack{0 < |\cc| \ll C \\ (d_{r+1},\ldots,d_n) = \mathbf{0} \\ Q^*(\cc) = 0}}|\cc|^{1-\frac{r}{2}}.
\end{align*}
By~\cite[Theorem 2]{HB02}, the number of integer solutions $|\cc| \ll C$ to $Q^*(\cc) = 0$ is $O_{\ve}(C^{n-2+\ve})$, whence we get that
\begin{equation}\label{eq:e12p}
E_{1,2}(P) \ll_{\ve} \Delta^{-\frac{1}{r}+\frac{\kappa}{2r}}\|F\|^{\frac{r}{2}-1+\ve}P^{n-\frac{r}{2}-\frac{\kappa}{2}+\ve}.
\end{equation}
Finally, turning to $E_2(P)$, we get
\begin{align*}
E_2(P) &\ll_{\ve} \Delta^{-\frac{1}{2}}\|F\|^{-\frac{n}{2}+\frac{3r}{4}-\frac{1}{2}}P^{n-\frac{r}{2}-1}\sum_{\substack{x \ll P \\ x \text{ dyadic}}}x^{-n+\frac{r}{2}-1}\times \\
&\quad \sum_{\substack{0 < |\cc| \ll C \\ d_j \neq 0 \text{ for some}\\ r+1 \leq j \leq n}}|\cc|^{1-\frac{r}{2}}\sum_{x\leq q \leq 2x}|S_q(\cc)|.
\end{align*}
Then by Lemma~\ref{lemmaavg3} we get that
\begin{equation*}
\begin{split}
E_2(P) &\ll_{\ve} \|F\|^{\frac{r}{2}+\ve}P^{n-\frac{r}{2}-1+\ve}.
\end{split}
\end{equation*}
Note that $E_2(P) \ll E_{1,1}(P)$ as $\|F\| \leq P$. It is also clear that the error terms in~\eqref{eq:mp} are smaller than $E_{1,1}(P)$ and $E_{1,2}(P)$. This completes the proof of Theorem~\ref{propbh19} when $(m_{r+1},\ldots,m_n) = \mathbf{0}$. 

\subsubsection{Case 2: when there exists $r+1\leq j \leq n$ such that $m_j \neq 0$}
We proceed exactly as in the previous case. The only difference is that we will now use Lemma~\ref{lemmaavg4} to estimate the exponential sums. 

We begin by calculating the main term. Let $J=\frac{\kappa+r}{2}-2$. Then the error term in~\eqref{eq:mpe0} can be replaced by $O(\mathcal{D}^{\frac{1}{2}}\|F\|^{\ve})$. As a result, the expression for the main term in~\eqref{eq:mp} remains unchanged.

For the error term $E(P)$ we get
\begin{align*}
E(P) &\ll  \Delta^{-\frac{1}{2}}\|F\|^{-\frac{n}{2}+\frac{3r}{4}-\frac{1}{2}}P^{n-\frac{r}{2}-1}\times \\
&\quad \sum_{\substack{x \ll Q \\ x \text{ dyadic}}}x^{-n+\frac{r}{2}-1}\sum_{\substack{\cc \in \ZZ^n \\ 0 < |\cc| \ll C}}|\cc|^{1-\frac{r}{2}}\sum_{x\leq q \leq 2x}|S_q(\cc)| \\
&\ll_{\ve}  \|F\|^{\frac{3r}{4}-\frac{n+1}{2}+\ve}P^{n-\frac{r}{2}-1+\ve}\sum_{\substack{\cc \in \ZZ^n \\ 0 < |\cc| \ll C}}|\cc|^{1-\frac{r}{2}}\\
&\ll_{\ve} \|F\|^{\frac{r-1}{2}+\ve}P^{n-\frac{r}{2}-1+\ve}.
\end{align*}
Note that the error term above is smaller than the error terms $E_{1,1}(P)$ and $E_{2,2}(P)$ in~\eqref{eq:e11p} and~\eqref{eq:e12p}. This completes the proof of  Theorem~\ref{propbh19}.

\bibliographystyle{amsplain}
\bibliography{qbib}
\end{document}